\newcommand{\K}{\mathbb{K}}
\newcommand{\R}{\mathbb{R}}
\newcommand{\C}{\mathbb{C}}
\newcommand{\N}{\mathbb{N}}
\newcommand{\A}{\mathcal{A}}
\newcommand{\U}{\mathcal{U}}
\renewcommand{\L}{\mathcal{L}}
\newcommand{\T}{\mathcal{T}}
\renewcommand{\d}{\, \mathrm{d}}
\renewcommand{\Re}{\operatorname{Re}}
\renewcommand{\mid}{\, \vert \,}
\newcommand{\z}{\zeta}
\renewcommand{\H}{\mathcal{H}}
\newcommand\lra{\longrightarrow}
\theoremstyle{plain}
\newtheorem{theorem}{Theorem}[section]
\newtheorem{proposition}[theorem]{Proposition}
\newtheorem{lemma}[theorem]{Lemma}
\newtheorem{corollary}[theorem]{Corollary}
\theoremstyle{definition}
\newtheorem{definition}[theorem]{Definition}
\newtheorem{example}[theorem]{Example}
\newtheorem{remark}[theorem]{Remark}
\newtheorem{assumption}[theorem]{Assumption}
\begin{document}
\title{Exponential stability for infinite-dimensional non-autonomous port-Hamiltonian Systems
}
\author{Bj\"orn Augner}
\address{Bj\"orn Augner, Technische Universit\"at Darmstadt, Fachbereich Mathematik, Arbeitsgruppe Analysis, Schlossgartenstraße 7,
64289 Darmstadt}
\email{augner@mma.tu-darmstadt.de}
\author{Hafida Laasri
}
\address{Hafida Laasri, Arbeitsgruppe Funktionalanalysis, Fakult\"at f\"ur Mathematik und Naturwissenschaften, Bergische Universit\"at Wuppertal, Gau\ss{}stra\ss{}e 20, 42119 Wuppertal
}
\email{laasri@uni-wuppertal.de}
\thanks{\footnotesize{The second author was supported by Deutsche Forschungsgemeinschaft  DFG (Grant LA 4197/1-1)}}

\maketitle

\begin{abstract}\label{abstract}
We study the non-autonomous version of an infinite-dimensional linear port-Hamiltonian system on an interval $[a, b]$. Employing abstract results on evolution families, we show $C^1$-well-posedness of the corresponding Cauchy problem, and thereby existence and uniqueness of classical solutions for sufficiently regular initial data.
Further, we demonstrate that a dissipation condition in the style of the dissipation condition sufficient for uniform exponential stability in the autonomous case also leads to a uniform exponential decay rate of the energy in this non-autonomous setting.
\end{abstract}

\bigskip
\noindent
{\bf Key words:}
Infinite-dimensional port-Hamiltonian system, non-autonomous Cauchy problem, evolution family,  well-posedness, uniform exponential stability.
 \medskip

\noindent
\textbf{MSC:} 47D06, 35L50, 35B64

\section{Introduction}\label{Section Introduction}

\subsection{Finite-dimensional, linear port-Hamiltonian control systems}
Hamiltonian mechanics have a very rich history for the modelling of mechanical systems in physics and engineering. At the core of Hamiltonian mechanics lies the notion of a \textit{Hamiltonian} functional, which typically can be interpreteted as an \textit{energy} of such a Hamiltonian system, thereby being a \textit{conserved quantity} in classical Hamiltonian systems.
On the other hand, especially in real-life, engineering applications, mechanical systems cannot be seen as being seperated from the \textit{outside}, but rather interact with their enviroment through certain mechanisms which may include exchange of energy, momentum, heat etc. These mechanisms are summarized under the term \textit{ports}, emphasising that information, energy, momentum etc.\ are transferred from one subsystem into another module of a larger, possibly very complex interconnection structure.
Beginning from the 1960's interest grew in such a \textit{port-based} modelling of physical or engineering systems which may, besides mechanical subsystems, also include e.g.\ thermal subsystems or electrical circuits, all of which can be modelled in the now extended class of port-Hamiltonian systems, i.e.\ dynamical systems which exhibit a (usually energy-driven) Hamiltonian dynamics, possibly also including resistive effects, and are interconnected via suitable ports.
There is a wide range of literature on the topic of (finite-dimensional) port-Hamiltonian modelling and analysis, but only recently \cite{Bea-Meh-Xu-Zwa18} the following class of \textit{non-autonomous} port-Hamiltonian differential-algebraic systems has been investigated:
 \begin{align*}
  E(t) \dot x(t)
   &= [(J - R) Q(t) - E(t) K(t)] x(t) + (B(t)-P(t)) u(t),
   \\
  y(t)
   &= (B(t) + P(t))^T Q(t) x(t) + (S + N) u(t).
 \end{align*}
where e.g.\ $E(t) \in \R^{n \times n}$ corresponds to some time-varying algebraic constraint on the system, $Q(t) \in \R^{n \times n}$ to some time-varying Hamiltonian $H(t)(x) = \frac{1}{2} x^T Q(t)^T E(t) x$, and the other matrices have certain symmetry and anti-symmetry properties, e.g.\ $J$ refers to energy conserving mechanisms whereas $R$ describes resistive effects.
A particular role does the term $K(t)$ play, namely it is chosen in such a way, that the control system has the passivity property
 \[
  \frac{\d}{\d t} \left[ H(t)(x(t)) \right]
   \leq \Re \, (u(t) \mid y(t)),
   \quad
   t \geq 0
 \]
along solutions, if the matrices are chosen properly; see \cite[Definition 5 and Theorem 15]{Bea-Meh-Xu-Zwa18} for details.

\subsection{Infinite-dimensional, linear port-Hamiltonian systems on intervals}
Starting from the 2000's efforts have been made to generalize the theory developed for finite-dimensional port-Hamiltonian systems to the \textit{infinite-dimensional} case; the first step, and in view of string, beams and electrical circuits already practically significant case, being the analysis of linear, infinite-dimensional port-Hamiltonian systems on intervals, i.e.\ systems of PDE's on an interval of the form
 \begin{equation}
  \frac{\partial x}{\partial t}(t, \zeta)
   = \left( P_1 \frac{\partial}{\partial \zeta} + P_0 \right) (\H(\zeta) x(t, \zeta)),
   \quad
   \z \in [a,b], \, t \geq 0
   \label{eqn:PHS}
 \end{equation}
supplemented by initial conditions $x(0,\cdot) = x_0$ and \textit{boundary control and observation} maps $\mathfrak{B}, \mathfrak{C}$ defined via the boundary trace
 \begin{equation}
  \left( \begin{array}{c} \mathfrak{B} x \\ \mathfrak{C} x \end{array} \right)
   = \left[ \begin{array}{c} \tilde W_B \\ \tilde W_C \end{array} \right] \left( \begin{array}{c} (\H x)(t,a) \\ (\H x)(t, b) \end{array} \right).
   \label{eqn:PHS-BC}
 \end{equation}
Here, $x(t,\cdot)$ lies in the state space $X = L^2(a,b; \K^n)$ where $\K$ are either the real $\R$ or complex numbers $\C$.
$\H(\zeta), P_0, P_1$ are $n\times n$ matrices and $\tilde W_B, \tilde W_C \in \K^{n \times 2n}$.
Provided the matrices are suitably chosen, and defining the \textit{energy} of the state $x(t,\cdot) \in X$ as
 \[
  H(t)
   = \frac{1}{2} \int_a^b (x(t, \zeta) \mid \H(\zeta) x(t, \zeta)) \d \zeta
 \]
one can consider impedance-passive systems, i.e.\ systems for which the (classical) solutions $x$ to the PDE \eqref{eqn:PHS} satisfy
 \[
  \frac{\d}{\d t} H(t)
   \leq \Re \, (u(t) \mid y(t)),
   \quad
   t \geq 0
 \]
where $(u(t), y(t)) := (\mathfrak{B} x(t), \mathfrak{C} x(t))$.
Autonomous port-Hamiltonian systems, i.e.\ systems of the form \eqref{eqn:PHS}--\eqref{eqn:PHS-BC}, have
 been investigated recently, e.g.\ in \cite{G-Z-M,Jac-Zwa12,Aug-Jac14,Vi,Z-G-M-V, Vi-Go-Z-Scharf}.
 Well-posedness and uniform exponential (or, asymptotic) stability for autonomous port-Hamiltonian systems can in most cases be tested via a simple matrix condition \cite{Jac-Zwa12, Jac-Mor-Zwa15}. If the \textit{Hamiltonian density} $\H$ is coercive as a matrix multiplication operator on $L^2(a,b;\K^{n\times n}),$ the energy of the system  \[\frac{1}{2} \|x\|_\mathcal H^2:=\frac{1}{2} (x\mid x)_\H:= \frac{1}{2} \displaystyle\int_a^b (x(\zeta) \mid \H(\zeta) x(\zeta)) \d\zeta\] 
 defines an equivalent (to the usual $L^2$-norm) norm on $L^2(a,b;\mathbb C^n)$. In this case, the existence of classical and mild solutions with non-increasing energy can be tested via a simple matrix condition. More precisely, consider the linear operator 
 \begin{align*}A\H&=P_1\frac{\partial}{\partial\zeta}\H+P_0\H\\
 D(A\H)&=\Big\{x\in L^2(a,b;\K^n)\mid \H x\in H^1(a,b;\K^n), \tilde W_B\left[\begin{array}{c}
 (\H x)(t,b) \\
 (\H x)(t,a) \\
 \end{array}\right]=0 \Big\}.
 \end{align*}
 Then $A\H$ generates a contractive $C_0$-semigroup on the energy state space $X_\mathcal H:=L^2_{\mathcal H}(a,b;\K^n):=(L^2(a,b;\K^n),\|\cdot\|_\mathcal H)$ if and only if $\label{matrix formal} W_B\Sigma W_B^*\geq 0$ and $P_0 + P_0^* \leq 0$, where  \[W_B:=\tilde W_B\begin{bmatrix}
  P_1& -P_1\\
  I& I 
  \end{bmatrix}^{-1}\text{ and } \Sigma:=
  \begin{bmatrix}
  0& I\\
  I&0 
  \end{bmatrix},\]
 and this is exactly the case, if the operator $A$ is dissipative on $L^2(a,b;\K^{n \times n})$ (equipped with the standard $L^2$-norm).
Moreover, the asymptotic behaviour of the solution of (\ref{Nport-Hamiltonian system})-(\ref{NBoundary control})
has also been studied, e.g.\ in \cite{Vi-Zw-Go-M, Jac-Zwa12, Aug-Jac14}. The authors give a result on exponential stability using a Lyapunov method in \cite{Vi-Zw-Go-M,Jac-Zwa12} and  using a frequency domain method in \cite{Aug-Jac14} based on classical stability theorems by Gearhart, Pr\"uss and Huang, and by Arendt, Batty, Lyubich and V\~u. It has been proved in \cite[Theorem III.2]{Vi-Zw-Go-M} that the operator $A\H$ from above generates an exponentially stable $C_0$-semigroup if for some constant $c>0$ one of the following conditions is satisfied for all $x\in D(A\H).$
\begin{align}
\label{StabiCondAut1}\Re (A\H x\mid x)_\H&\leq -c |\H(b)x(b)|^2.
\\ \label{StabiCondAut2} \Re (A\H x\mid x)_\H&\leq -c |\H(a)x(a)|^2.
\end{align}
These inequalities hold, for example, if $W_B \Sigma W_B^*> 0$ \cite[Lemma 9.4.1]{Jac-Zwa12}, i.e.\ $W_B \Sigma W_B^*$ is a symmetric, positive definite matrix.
 Port-Hamiltonian systems of order $N\geq 2$ also have been investigated with similar results in \cite{Aug-Jac14, Vi}.
 
\noindent In contrast to autonomous port-Hamiltonian systems, the non-autonomous and infinite-dimensional state space situation with $\H$ and/or $\tilde W_B$ depending on the time variable has not been considered so far. The main purpose of this paper, therefore, is to generalize the known results on well-posedness and exponential stability for autonomous port-Hamiltonian systems to the non-autonomous setting. In particular, we show that the technique used in \cite{Vi-Zw-Go-M, Jac-Zwa12} for the proof of uniform exponential stability can be applied to non-autonomous port-Hamiltonian systems as well.
To be more precise, we consider the following non-autonomous partial differential equation
\begin{align}\label{Nport-Hamiltonian system}
\frac{\partial x}{\partial t} (t,\zeta)&=\Big(P_1\frac{\partial}{\partial\zeta}+P_0\Big)(\mathcal{H}(t,\zeta)x(t,\zeta)) + K(t,\zeta) x(t,\zeta)
 &&\zeta\in[a,b],\  t\geq 0,
\\ \label{Nport-Hamiltonian system-initial}x(0,\zeta)&=x_0(\zeta),
 &&\zeta\in [a,b],
\end{align}
on the state space $X=L^2(a,b;\K^n)$ (for $\K = \R$ or $\C$), and with boundary conditions of the type
\begin{align} \label{NBoundary control}\tilde W_B\left[
\begin{array}{c}
(\mathcal Hx)(t,a) \\
(\mathcal Hx)(t,b) \\
\end{array}
\right]&=0, \qquad \qquad\quad  t\geq 0,
\end{align}
where $x(t,\zeta)$ takes its values in $\mathbb K^n, \mathcal H(t,\zeta), P_0,P_1$ are $n\times n$ matrices and
$\tilde W_B$ is an $n\times 2n$ matrix, but $\H(t,\zeta)$ may depend on the time variable $t \geq 0$. Moreover, in contast to the autonomous case a further additive perturbation $K(t,\zeta)$ may be present where $K(t,\zeta) \in \K^{n \times n}$ is a matrix, possibly depending on $t \geq 0$ and $\z \in [a,b]$.
We call this class of PDE an (infinite-dimensional, linear) \textit{non-autonomous port-Hamiltonian system} and it covers, among others, the wave equation, the transport equation,  beam equations as well as certain models of electrical circuits, all with possibly time- and spatial dependent parameters.

\noindent To do so, we write system (\ref{Nport-Hamiltonian system})-(\ref{NBoundary control}) as an abstract non-autonomous evolution equation of the form 
 \begin{align}\label{IntNonautonomous InitialAbstValueProblem}
 \dot x(t)-AB(t)x(t)-P(t)x(t)&=0\  \ {\rm   a.e.\  on  } \ [0,\infty),\\ \label{IntNonautonomous InitialAbstValueProblemInitialCdd} x(0)&=x_0,\quad 0>0,
 \end{align}
where $A: D(A):X\lra X$ is the generator of a contractive $C_0$-semigroup, $B:[0,+\infty)\lra \L(X)$ is a time-dependent multiplicative perturbation and $P: [0, +\infty)\lra \L(X)$ is time-dependent additive perturbation. More precisely, we investigate whether the operator family $\mathcal{A} = \{A B(t) -P(t): t \in [0, \infty) \}$ generates a strongly continuous evolution family $\mathcal{U} = (U(t,s))_{t \geq s \geq 0}$ on the state space $X$. The well-posedness of this abstract class has been studied  by Schnaubelt and Weiss \cite{SchWei10}. The parabolic case has been investigated in \cite{AuJaLa15} with $P(t) = 0$. 
\noindent This paper is organized as follows. In Section \ref{Section1} we recall some abstract results on the theory of evolution families and the well-posedness for non-autonomous evolution equations and give some preliminary results. In Section \ref{Section 2} we provide sufficient conditions for which the non-autonomous port-Hamiltonian system is well-posed and the corresponding evolution family is exponentially stable. We also discuss the necessecity of some imposed parameter restrictions. The last section is devoted to the examples of the one-dimensional wave equation and the Timoshenko beam model.

\section{Background on evolution families and preliminary results}\label{Section1}
Throughout this section, $X$ is a Hilbert space over $\K=\C$ or $\R.$ We denote by $( \cdot|\cdot)$ the scalar product and by $\|\cdot\|$ the norm on $X.$ Let $\{A(t) \mid t\geq 0\}$ be a family of linear, closed operators with domains $\{D(A(t))\mid t\geq 0\}.$ Consider the \textit{non-autonomous Cauchy problems} 
\begin{equation}\label{Nonautonomous InitialValueProblem}
\dot u(t)-A(t)u(t)=0\  \ {\rm   a.e.\  on  } \ [s,\infty),\ u(s)=x_s, (s>0).
\end{equation}
Recall that a continuous function $u:[s,\infty)\lra X$ is called a \textit{classical solution} of (\ref{Nonautonomous InitialValueProblem}) if $u(t)\in D(A(t))$ for all $t\geq s, u\in C^1((s,\infty), X)$ and $u$ satisfies (\ref{Nonautonomous InitialValueProblem}), so that in particular $A u \in C((s,\infty),X)$. 

As in the autonomous case, well-posedness means  that for sufficiently regular initial data, \eqref{Nonautonomous InitialValueProblem} has a unique classical solution which continuously depends on the initial data $x_s \in D(A(s))$.
\par\noindent The study of non-autonomous evolution equations has a long history which goes back to Vito Volterra in 1938 \cite{V.H}. However, it was only in 1950--1970 that a general
 theory has been developed by T. Kato \cite{K}, \cite{Ka70},
 H.~Tanabe \cite{Ta79}, P. E. Sobolevsky \cite{So} and others. P. Acquistapace
 and B. Terreni \cite{Ac-Te}
 extended the previous work by Kato, Tanabe, Sobolevsky
 and obtained some of the most powerful results. Their approach is based on the
 time-discretization of the given equation and the use of
 semigroup theory.  Another approach to these equations using semigroup
 theory and evolution families has been used by J. S. Howland \cite{How}. This approach
 is presented in the monograph \cite{CL} by C. Chicone and Y. Latushkin, and has been further developed by R. Nagel and G. Nickel \cite{Na-Ni}, R. Schnaubelt
 \cite{Sch2} and many other authors.  For the Hilbert space setting, a variational approach has been developed essentially by Lions's school, leading to the existence and uniqueness of weak solutions \cite{Lio61,Lio-Mag72}.
\medskip

\noindent The existence and uniqueness for solutions of a non-autonomous Cauchy problem is closely related to the existence of a \textit{(strongly continuous) evolution family} 
 \[\U:=\left\{ U(t,s): t\geq s\geq 0 \right\}\subset \L(X)\]
 i.e., a family that has the following properties: $U(t,t)=I$ and $U(t,s)= U(t,r)U(r,s)$ for every $0\leq s\leq r\leq t$ and  $U(\cdot,\cdot): \Delta\lra \L(X)$ is strongly continuous where $\Delta:=\{(t,s) \in \R^2 \mid t \geq s \geq 0\}.$
 More precisely, if the abstract Cauchy problem is well-posed for all initial data $x_s \in D(A(s))$ and initial times $s \geq 0$, i.e.\ \eqref{Nonautonomous InitialValueProblem}
has a unique classical solution which depends continuously on the initial data, then the solutions $x(t,s,x_s)$ define an evolution family $\U \subset \L(X)$ given by $U(t,s) x_s := x(t,s,x_s)$.
On the other hand, for an evolution family to be the solution operator (for classical solutions) of an abstract Cauchy problem, $\U$ needs to satisfy further properties then just being an evolution family, e.g.\ $U(\cdot,s)x \in C^1((s,\infty);X)$ for all $x_s \in D_s$ for some dense subsets $D_s \subset X$ \cite[IV.8]{EnNa00}.

\noindent The \textit{(exponential) growth bound} of an evolution family $\U$ is defined by 
 \[w_0(\U):=\inf\left\{w\in \R\mid \text{ there is } M_w\geq 1 \text{ with } \|U(t,s)\|\leq M_we^{w(t-s)} \text{ for } t\geq s\right\}.\]
 The evolution family is called \textit{exponentially bounded} if $w_0(\U)<+\infty$ and \textit{ exponentially stable} if $w_0(\U)<0.$ 
 If $(T(t)))_{t \geq 0}$ is a $C_0$-semigroup on $X$ then $U(t,s):=T(t-s)$ yields a strongly continuous (and exponentially bounded) evolution family. In contrast to $C_0$-semigroups which are always exponentially bounded, i.e.\ $\| T(t)\| \leq M e^{\omega t}$ ($t \geq 0$) for some $M \geq 1$, $\omega \in \R$, see e.g.\ \cite[Proposition I.5.5]{EnNa00}, the same cannot be said about evolution families in general. Moreover, in many cases uniform exponential stability for a $C_0$-semigroup (or, the growth bound) can be determined via the spectrum of its generator, e.g.\ for analytic semigroups. In contrast, for evolution families this fails to be true even in the finite dimensional case, see e.g.\ \cite[Example VI.9.9]{EnNa00}. Nevertheless, the asymptotic behaviour of an exponential evolution family can be characterized in terms of the associated \textit{evolution semigroup}. Indeed, it is well known \cite[Section 3.3]{CL} that to each exponentially bounded evolution family $\U$ one may associate a unique $C_0$-semigroup $\T$ on $L^p([0,+\infty);X)$ ($p\in[1,\infty)$) defined for each $f\in L^p([0,+\infty);X)$ by setting
 
 \[ (\T(t)f)(s):=\left\{%
 \begin{array}{ll}
 U(s,s-t)f(s-t)\quad &\text{for } s, s-t\in[0,+\infty),\\
 0 & \text{for } s\in [0,+\infty), t-s\notin [0,+\infty), \\
 \end{array}%
 \right. \]
Denoting by $G$ the generator of $\T$, the following characterisation is well known: Let $\U$ be an exponentially bounded evolution family on $X$ and let $p\in[1,\infty).$ Then the following assertions are equivalent.
\begin{itemize}
	\item [$(i)$] $\U$ is exponentially stable. 
	\item [$(ii)$] The generator $G$ of the associated evolution semigroup is surjective. 
	\item [$(iii)$] For all $x\in X$ and $s\geq 0$ there exists a constant $M>0$ such that  
	\[\int_s^\infty\|U(t,s)x\|^p \d t \leq M \|x\|^p. \]
	\item [$(iv)$] For all $f\in L^p([0,+\infty);X)$ one has $U\star f\in L^p([0,+\infty);X).$
\end{itemize}

\noindent For the proof and other concepts of stability, we refer e.g., to \cite{Sch2, BaChTo02, Ha82, PauSe18} and the references therein. 
\par

\noindent We abstain from this route towards exponential stability for non-autonomous port-Hamiltonian systems and follow a different approach for the study of exponential stability by mimicking the techniques used in \cite{Vi-Zw-Go-M, Jac-Zwa12} for the autonomous case. These are based on an idea of Cox and Zuazua \cite{Cox-Zua95}.

\begin{definition}\label{Definition well posedness} $(a)$ The non-autonomous Cauchy problem (\ref{Nonautonomous InitialValueProblem}) is called $C^1$-\textit{well posed} if there is a family $\{Y_t\mid  t\geq 0\}$ of dense subspaces of $X$ such that:
\begin{itemize}
	\item [$(i)$] $Y_t\subseteq D(A(t))$ for all $t\geq 0.$
	\item [$(ii)$] For each $s\geq 0$ and $x_s\in Y_s$ the Cauchy problem \eqref{Nonautonomous InitialValueProblem} has a unique classical solution $u(\cdot,s,x_s)$ with $u(t,s,x_s)\in Y_t$ for all $t\geq s.$
	\item [$(iii)$] The solutions depend continuously on the initial data $s, x_s.$
\end{itemize} 
In this case we also say that (\ref{Nonautonomous InitialValueProblem}) is  $C^1$-\textit{well posed} on $Y_t$ if we want to specify the \textit{regularity subspaces} $Y_t, t\geq 0.$
\par\noindent $(b)$ We say that the family $\left\{A(t), t\geq 0 \right\}$ \textit{generates an evolution family} $\U$ if there is a family $\{Y_t\mid  t\geq 0\}$ of dense subspaces of $X$ with $Y_t\subset D(A(t)) ,\ U(t,s)Y_s\subset Y_t$ and for every $x_s\in Y_s$ the function $U(\cdot,s)x_s$ is a classical solution of (\ref{Nonautonomous InitialValueProblem}).
\end {definition}

\noindent In the autonomous situation, i.e., if $A(t)=A$ is a time-independent operator, it is well known that the associated Cauchy problem is $C^1$-well-posed if and only if $A$ generates a $C_0$-semigroup $(T(t))_{t\geq 0}$. In this case the unique classical solution to (\ref{Nonautonomous InitialValueProblem}) is given by $T(\cdot-s) x_s$ for each $x_s\in D(A)$, $s \geq 0$.
\par\noindent 

	
\par\noindent As mentioned in the introduction, the evolution law $(i)$ does not guarantee that the evolution family is strongly differentiable in the first component and that $\U$ is generated by a family of linear closed operators. In fact, it may even happen that the trajectory $U(\cdot, s)x$ is differentiable only for $x=0.$ The standard counterexample is given by $U(t,s)=\frac{p(t)}{p(s)}$ with $X=\C$ and $p$ is a nowhere differentiable function such that $p$ and $1/p\in C_b(\R).$ On the other hand, the following characterization holds:  
\begin{proposition}The Cauchy problem  (\ref{Nonautonomous InitialValueProblem}) is $C^1$-\textit{well posed} if and only if   $\left\{A(t), t\geq 0 \right\}$ generates a unique evolution family.
\end{proposition}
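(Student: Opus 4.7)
I would prove this equivalence by the standard construction: build the evolution family from the classical solutions in one direction, and recover $C^1$-well-posedness from the generated evolution family in the other.

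For the forward direction, assume the Cauchy problem is $C^1$-well-posed on dense subspaces $\{Y_t\}_{t\geq 0}$, and set $U(t,s)x_s := u(t,s,x_s)$ for $x_s \in Y_s$, where $u(\cdot,s,x_s)$ is the unique classical solution. Linearity of $U(t,s)$ on $Y_s$ is immediate from linearity of (\ref{Nonautonomous InitialValueProblem}) together with the uniqueness clause, while continuous dependence on the initial data supplies a bound $\|U(t,s)x_s\|\leq C(t,s)\|x_s\|$, allowing $U(t,s)$ to be extended by density to a bounded operator on $X$. The initial-value condition gives $U(s,s)=I$, and the cocycle identity $U(t,s)=U(t,r)U(r,s)$ on $Y_s$ (for $s\leq r\leq t$) follows by observing that $u(\cdot,s,x_s)\restr{}{[r,\infty)}$ and $u(\cdot,r,U(r,s)x_s)$ are two classical solutions starting at time $r$ from the same value $U(r,s)x_s\in Y_r$, hence coincide by uniqueness; it then extends to $X$ by density. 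Strong continuity of $(t,s)\mapsto U(t,s)x$ comes from the continuous-dependence hypothesis on $Y_s$ together with uniform boundedness of $U(t,s)$ on compact parameter sets. Uniqueness of the generated evolution family is a further consequence of the uniqueness of classical solutions: any other generated $\mathcal V$ must satisfy $V(\cdot,s)x_s = U(\cdot,s)x_s$ on the dense set $Y_s$, hence on $X$.

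For the backward direction, assume $\{A(t)\}$ generates a unique evolution family $\U$ with regularity subspaces $\{Y_t\}$. Existence of classical solutions is immediate by setting $u(\cdot,s,x_s) := U(\cdot,s)x_s$ for $x_s \in Y_s$, and continuous dependence on the initial data follows from the boundedness of $U(t,s)$. The delicate step is uniqueness of the classical solution: suppose $v$ is a second classical solution with $v(s)=x_s\in Y_s$, $v(t)\in Y_t$, and $v\neq U(\cdot,s)x_s$. The strategy is to construct a competing evolution family $\tilde{\U}$ that still generates $\{A(t)\}$ but differs from $\U$ on the trajectory through $x_s$, thereby contradicting the uniqueness assumption.

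The main obstacle lies precisely in this uniqueness step. As the preceding remark with the nowhere-differentiable $p$ illustrates, an evolution family need not have differentiable trajectories at all, so the uniqueness of classical solutions cannot be inferred from the mere existence of $\U$; it requires the full assumption that $\U$ is the \emph{unique} evolution family generated by $\{A(t)\}$. Carefully assembling the competing family $\tilde{\U}$ so that it satisfies the cocycle property, strong continuity, and $\tilde U(t,s)Y_s\subseteq Y_t$ while using $v$ as the trajectory through $x_s$ is the technical core of the argument; once this is in place, the equivalence follows.
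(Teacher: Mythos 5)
The paper offers no proof of its own for this proposition; it simply refers to \cite[Proposition 9.3]{EnNa00} and \cite[Proposition 3.10]{GN96}, so your attempt can only be measured against the standard argument found there. Your forward direction is that standard argument and is essentially sound: define $U(t,s)$ on $Y_s$ through the solutions, get linearity and the cocycle identity from uniqueness, extend by density via continuous dependence, and conclude. One small caveat: your uniqueness-of-the-family argument tacitly assumes a competing generated family $\mathcal{V}$ uses regularity spaces meeting $Y_s$ in a dense set, since Definition \ref{Definition well posedness}(a)(ii) only asserts uniqueness of classical solutions for initial data in $Y_s$; this deserves a sentence but is not the main problem.

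The genuine gap is in the backward direction, at precisely the step you yourself label ``the technical core'': uniqueness of classical solutions. You announce the strategy of building a competing evolution family $\tilde{\U}$ through a deviant solution $v$ and then stop, so the hardest part of the equivalence is asserted rather than proved. Worse, the strategy as described faces two concrete obstructions. First, you assume $v(t)\in Y_t$, but a classical solution is only required to satisfy $v(t)\in D(A(t))$; the regularity spaces constrain the distinguished solution $u(\cdot,s,x_s)$, not an arbitrary competitor, so $\tilde{\U}$ cannot simply reuse the spaces $Y_t$. Second, an evolution family cannot be modified along a single trajectory: linearity and the two-parameter cocycle law force simultaneous changes to the trajectories through every scalar multiple of $x_s$, through every point $v(r)$ with $r>s$, and through every sum $v(r)+y$ with $y\in Y_r$, all while preserving strong continuity and local boundedness --- in effect one must re-solve the whole problem, and it is not shown that this can be done. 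A complete proof must either carry out this construction in detail or reach uniqueness by a different route (for instance the evolution-semigroup characterisation used in \cite{GN96}); as written, the equivalence is not established.
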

For this statement, we refer to \cite[Proposition 9.3]{EnNa00} or \cite[Proposition 3.10]{GN96}. 
\medskip

Let us for the moment consider the special case where the domains $D(A(t))=D$ are time independent. Then  it is well known that the family of closed, linear operators $\{A(t) \mid t\geq 0\}$ generates a unique evolution family with $Y_t=D$ for all $t\geq 0$ if  the following assumptions are satisfied:
\medskip

({\bf H1}) $\{A(t) \mid t\geq 0\}$ is \textit{Kato-stable}: i.e., for $T\geq 0$ there are $\omega\in \R , M\geq 1$ such that 
\begin{equation*}\label{eq Kato Stability} 
\|(\lambda-A(t_n))^{-1}(\lambda-A(t_{n-1}))^{-1}\cdots(\lambda-A(t_0))^{-1}\|_{\L(X)}\leq M(\lambda-\omega)^{-n},
\end{equation*}
for all $0\leq t_0\leq t_1\leq \cdots\leq t_n\leq T, n\in \N$ and all  $\lambda>\omega.$
\par ({\bf H2}) For each $T\geq 0$ and $x\in D$ the function $A(\cdot)x\in C^{1}([0,T]; X).$

\medskip

This result is due to Kato \cite{Ka70}, we refer to the survey paper \cite{Sch2} for further reading. The stability condition $({\bf H1})$ is always fulfilled if for each $t\geq 0$ the operator $A(t)$ generates a contractive $C_0$-semigroup. For general semigroups, we recall the following two useful stability tests \cite[Propositions 4.3.2 and 4.3.3]{Ta79}:

\begin{proposition}\label{Proposition Kato stability test}Assume that there is a family $\left\{\|\cdot \|_t, t\geq 0\right\}$ of norms on $X$ that are equivalent to the original time independent norm $\|\cdot\|$, and such that  for each $T\geq 0$ there exists a constant $c\geq 0$ such that 
\begin{equation}\label{stability test condition}
\|x\|_t\leq e^{c|t-s|}\|x\|_s
\end{equation}
for all $x\in X$ and $t,s\in[0,T].$
If $A(t)$ generates a contractive semigroup  on $X_t:=(X, \|\cdot\|_t)$ for all $t\geq 0$ then the family $\left\{A(t), t\geq 0 \right\}$ is Kato-stable. If, moreover, $M:[0,\infty)\lra \L(X)$ is a locally bounded function, then the perturbed family $\{ A(t)+M(t) \mid t\geq 0 \}$ is again Kato-stable.
\end{proposition}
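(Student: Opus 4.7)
The plan is to reduce Kato-stability to the Hille--Yosida resolvent bound in each individual norm $\|\cdot\|_t$ and to patch the time-dependent norms together using \eqref{stability test condition}. As a preliminary step, I would iterate \eqref{stability test condition} in both directions with $s=0$ (using symmetry to get the reverse inequality $\|x\|_0 \leq e^{ct}\|x\|_t$) to obtain, for each $T \geq 0$, uniform constants $0 < m_T \leq M_T$ with $m_T \|x\| \leq \|x\|_t \leq M_T \|x\|$ for all $t \in [0,T]$ and $x \in X$. These are needed at the end to pass between the norm $\|\cdot\|$ appearing in the Kato-stability inequality and the norms $\|\cdot\|_t$ in which the contractivity lives.

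Fix $T \geq 0$, a chain $0 \leq t_0 \leq \cdots \leq t_n \leq T$, and $\lambda > 0$, and set $R_k := (\lambda - A(t_k))^{-1}$. Since $A(t_k)$ generates a contraction semigroup on $X_{t_k}$, Hille--Yosida gives $\|R_k\|_{\L(X_{t_k})} \leq \lambda^{-1}$. I would then run the induction alternating, at each step, the resolvent bound in $X_{t_k}$ with a single application of \eqref{stability test condition} to transfer to $X_{t_{k-1}}$,
\[
\|R_k R_{k-1} \cdots R_0 y\|_{t_k}
 \leq \lambda^{-1} \|R_{k-1} \cdots R_0 y\|_{t_k}
 \leq \lambda^{-1} e^{c(t_k - t_{k-1})} \|R_{k-1} \cdots R_0 y\|_{t_{k-1}}.
\]
Telescoping in $k$ yields $\|R_n \cdots R_0 y\|_{t_n} \leq \lambda^{-(n+1)} e^{c(t_n - t_0)} \|y\|_{t_0}$, and reconverting to the original norm via the uniform equivalence produces
\[
\|(\lambda-A(t_n))^{-1}\cdots(\lambda-A(t_0))^{-1}\|_{\L(X)} \leq \frac{M_T}{m_T}\, e^{cT}\, \lambda^{-(n+1)},
\]
which is the required Kato-stability estimate with $\omega = 0$.

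For the additive perturbation I would first use the uniform equivalence to observe that
\[
\|M(t)\|_{\L(X_t)} \leq \tfrac{M_T}{m_T} \|M(t)\|_{\L(X)},
\]
so that $L_T := \sup_{t \in [0,T]} \|M(t)\|_{\L(X_t)}$ is finite whenever $M(\cdot)$ is locally bounded in $\L(X)$. The bounded perturbation theorem for $C_0$-semigroups then asserts that $A(t) + M(t)$ generates a $C_0$-semigroup on $X_t$ of type at most $L_T$, hence $\|(\lambda - A(t) - M(t))^{-1}\|_{\L(X_t)} \leq (\lambda - L_T)^{-1}$ for $\lambda > L_T$. Rerunning the chaining argument with this shifted resolvent bound gives Kato-stability of $\{A(t)+M(t) : t \geq 0\}$ with $\omega := L_T$.

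The only delicate bookkeeping (and the step I would check first) is pairing each application of the contraction estimate in $X_{t_k}$ with the correct direction of \eqref{stability test condition} so that the exponents telescope to the $n$-independent quantity $c(t_n - t_0) \leq cT$; once that is in place, the rest of the argument is purely mechanical.
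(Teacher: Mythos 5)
Your argument is correct: the telescoping of the contraction resolvent bounds $\|(\lambda-A(t_k))^{-1}\|_{\L(X_{t_k})}\leq\lambda^{-1}$ against \eqref{stability test condition}, followed by the uniform norm equivalence on $[0,T]$ (obtained by comparing each $\|\cdot\|_t$ with $\|\cdot\|_0$), and the quasi-contractive resolvent bound $(\lambda-L_T)^{-1}$ for the bounded perturbation, is exactly the standard proof of this stability criterion. The paper itself does not prove the proposition but cites Tanabe, and your chaining argument is precisely the one underlying that reference, so nothing further is needed.
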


\subsection{Time-varying multiplicative perturbations of contraction semigroup generators}
In this subsection we consider the special case where $A(t)$ as in \eqref{Nonautonomous InitialValueProblem} is defined as a bounded non-autonomous multiplicative perturbation of a dissipative operator. More precisely, let $B:[0,\infty)\lra \L(X)$  be a function of class $C^2.$  Assume that $B$ is self-adjoint and uniformly coercive, i.e., $B(t)^*=B(t)$ and 
\begin{equation}\label{uniformly coerciveness of B}
 (B(t)x|x)\geq \beta\|x\|^2 
\end{equation}
for some constant $\beta>0$, and for all $t\geq 0$ and $x\in H.$ Then for each $t\in[0,\infty)$ the function 
\begin{equation}\label{Eq Equivalent norm}\|x\|_{t}:=\sqrt{(B(t)x|x)}=\|B^{1/2}(t)x\|
\end{equation}
defines a norm which is equivalent to the time-independent reference norm $\|\cdot\|.$ Moreover, the norms $\left\{\|\cdot\|_t, t\geq 0\right\}$ are uniformly equivalent to $\|\cdot\|$ on each compact interval $[0,T]\subset [0,\infty).$ Indeed, let $T>0$ and set $\beta_T:=\max_{t\in[0,T]}\|B(t)\|_{\L(X)}$, then we have 
\begin{equation}\label{eq LocEquiNorm}
\frac{1}{\beta_T}\|x\|_s\leq \|x\|\leq \frac{1}{\beta}\|x\|_t\ \text{ for all } t,s\in[0,T]\ \text{ and } \ x\in X.
\end{equation}
\medskip

\par\noindent Let $A:D(A)\subset X\lra X$ be the infinitesimal generator of a contractive $C_0$-semigroup on $X$ and consider the following class of non-autonomous problems:
\begin{equation}\label{Nonautonomous InitialAbstValueProblem}
\dot x(t)-AB(t)x(t)=0\  \ {\rm   a.e.\  on  } \ [s,\infty),\ x(s)=x_s, \, s\geq 0.
\end{equation}
Here, the operators $AB(t)$ are defined on their natural domains  
\[D(AB(t))=\{ x\in X\mid B(t)x\in D(A)\}\]
which (in contrast to $D(A)$) may depend on $t.$ Then for each $t\geq 0$ the operator $AB(t)$, and thus by similarity $B(t)A,$ generates a contractive semigroup on $X_t$ \cite[Lemma 7.2.3]{Jac-Zwa12}.  Note that $AB(t)$ and $B(t)A$ are similar since $B(t)AB(t)B^{-1}(t)=B(t)A$ and  $B^{-1}(t)\in\L(X)$ for every $t\geq 0.$

Theorem \ref{main Thm1} below shows that \eqref{Nonautonomous InitialAbstValueProblem} is  $C^1$-well posedness and that the associated evolution family is exponentially bounded with $M_\omega = 1$. The latter will be needed in the next section where we study the exponential stability of the evolution family generated by non-autonomous port-Hamiltonian systems. We point out that the $C^1$-well posedness for non-autonomous evolution equations of the form (\ref{Nonautonomous InitialAbstValueProblem}) has been studied by Schnaubelt and Weiss \cite[Proposition 2.8-(a)]{SchWei10}.
We will not prove this theorem in full detail, but for sake of making this paper easier readable sketch the ideas used in the proof. In fact, the proof of \cite[Proposition 2.8-(a)]{SchWei10} is based on an perturbation argument due to Curtain and Pritchard, see \cite[Proposition 2.7]{SchWei10}, which is not needed here. 
\begin{theorem}\label{main Thm1} The Cauchy problem \eqref{Nonautonomous InitialAbstValueProblem} is $C^1$-well posed with regularity space $\left\{D(AB(t)), t\geq 0\right\}.$ Further, for each compact interval $[0,T]\subset [0,\infty)$  there exist constants $c_T, \kappa_T>0$ such that 
\begin{equation}\label{ExpStabIneq}
\|U(t,\tau)x\|_t^2\leq e^{\frac{M_T}{\beta}(t-s)}\|U(s,\tau)x\|_s^2\quad (0\leq \tau\leq s\leq t\leq T),
\end{equation}
for each $x\in X$ where $M_T=\underset{t\in[0,T]}{\max}\|\dot B(t)\|_{\L(X)}.$
\end{theorem}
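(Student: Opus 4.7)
The plan is to get around the variable-domain difficulty by a change of variables. Since $B(t) \in \L(X)$ is boundedly invertible and maps $D(AB(t))$ bijectively onto $D(A)$, I would introduce $y(t) := B(t) x(t)$, formally turning \eqref{Nonautonomous InitialAbstValueProblem} into the equation
\[
 \dot y(t) = \tilde A(t) y(t),
 \qquad \tilde A(t) := B(t) A + \dot B(t) B(t)^{-1},
 \qquad D(\tilde A(t)) = D(A),
\]
which now has a \emph{time-independent} domain. To this auxiliary problem the framework of Proposition \ref{Proposition Kato stability test} applies.

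To verify hypothesis \textbf{(H1)}, I would use that $B(t) A$ is similar to $AB(t)$ via $B(t)$ and hence generates a contractive $C_0$-semigroup on $X_t$. The required norm comparison \eqref{stability test condition} follows from
\[
 \frac{d}{dr} \|x\|_r^2
  = \bigl( \dot B(r) x \mid x \bigr)
  \leq \|\dot B(r)\|_{\L(X)} \|x\|^2
  \leq \frac{M_T}{\beta} \|x\|_r^2
  \qquad (r \in [0,T]),
\]
together with Gr\"onwall, giving $c = M_T/(2\beta)$; here the last step uses the coercivity bound $\|x\|^2 \leq \beta^{-1} \|x\|_r^2$. Since $\dot B(\cdot) B(\cdot)^{-1}$ is locally bounded in $\L(X)$ (because $B \in C^2$), Proposition \ref{Proposition Kato stability test} then yields Kato-stability of $\{\tilde A(t)\}_t$. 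Hypothesis \textbf{(H2)} is immediate from $B \in C^2$: for $x \in D(A)$, $\tilde A(\cdot) x = B(\cdot) A x + \dot B(\cdot) B(\cdot)^{-1} x$ lies in $C^1([0,T]; X)$. Kato's theorem then produces a unique evolution family $(V(t,s))_{t \geq s \geq 0}$ with $V(t,s) D(A) \subseteq D(A)$ and $t \mapsto V(t,s) y_s$ a classical solution of the $y$-problem for each $y_s \in D(A)$.

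Setting $U(t,s) := B(t)^{-1} V(t,s) B(s)$ I would verify that $U$ is a strongly continuous evolution family, that $U(t,s) D(AB(s)) \subseteq D(AB(t))$, and that $U(\cdot, s) x_s$ is a classical solution of \eqref{Nonautonomous InitialAbstValueProblem} for $x_s \in D(AB(s))$ --- a straightforward reversal of the substitution. Density of $D(AB(s))$ in $X$ follows from $D(AB(s)) = B(s)^{-1} D(A)$ and density of $D(A)$.

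Finally, for the estimate \eqref{ExpStabIneq}, for $x \in D(AB(\tau))$ set $x(t) := U(t,\tau) x$ and $z(t) := B(t) x(t) \in D(A)$; then, using self-adjointness of $B(t)$,
\[
 \frac{d}{dt} \|x(t)\|_t^2
  = \bigl( \dot B(t) x(t) \mid x(t) \bigr) + 2 \Re \bigl( A z(t) \mid z(t) \bigr)
  \leq \frac{M_T}{\beta} \|x(t)\|_t^2,
\]
since $A$ is dissipative as generator of a contractive $C_0$-semigroup on $(X, \|\cdot\|)$ and $\|x(t)\|^2 \leq \beta^{-1} \|x(t)\|_t^2$. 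Gr\"onwall delivers \eqref{ExpStabIneq} on the dense subspace $D(AB(\tau))$, and the general case follows by continuity of $U(t,\tau)$. The main technical point is the opening observation that $y = Bx$ reduces the problem to one with time-independent domain and a bounded additive perturbation; the rest is then a standard Kato construction paired with a Lyapunov-type differentiation in the time-varying inner product.
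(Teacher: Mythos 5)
Your proposal follows essentially the same route as the paper's own (sketched) proof: the substitution $y = B(\cdot)x$ reducing to the family $B(t)A + \dot B(t)B(t)^{-1}$ with fixed domain $D(A)$, verification of Kato's conditions via Proposition \ref{Proposition Kato stability test} and the similarity $B(t)(AB(t))B(t)^{-1} = B(t)A$, the back-transformation $U(t,s) = B(t)^{-1}V(t,s)B(s)$, and the Lyapunov computation $\frac{\d}{\d t}\|U(t,\tau)x\|_t^2 \leq (\dot B(t)U(t,\tau)x \mid U(t,\tau)x)$ using dissipativity of $A$ followed by Gr\"onwall. Your write-up is in fact somewhat more explicit than the paper's sketch (e.g.\ in spelling out the dropped term $2\Re\,(Az\mid z)\leq 0$ and the density argument), and I see no gap.
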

\begin{proof}[Sketch of proof]
 \begin{enumerate}
  \item
   First, one shows that the operator family $\mathcal{A} = \{A B(t) \mid t \geq 0\}$ generates an evolution family $\mathcal{U}$ with regularity spaces $Y_t = D(A B(t))$, $t \geq 0$, if and only if $\tilde {\mathcal{A}} = \{ B(t) A + \dot B(t) B(t)^{-1} \mid t \geq 0\}$ generates an evolution family $\mathcal{V}$ with time-independent regularity space $D(A)$, and that
    \[
     U(t,s)
      = B(t)^{-1} V(t,s) B(s),
      \quad 0 \leq s \leq t.
    \]
   This result is based on the fact that $B \in C^1([0, \infty), \L(X))$ is uniformly coercive.
  \item
   Being able to switch to the case of a time-independent regularity space $D(A)$ for $\tilde {\mathcal{A}}$, one next proves that for $B \in C^2([0,\infty), \L(X))$ Kato's conditions ({\bf H1})--({\bf H2}) hold true, then uses Proposition \ref{Proposition Kato stability test}.
  \item
   Finally, inequality \eqref{ExpStabIneq} can be proved by considering
   \begin{align}
    \frac{\d}{\d t}\|U(t,\tau)x_\tau\|^2_t
    &\label{locBddEVF}
    \leq (\dot B(t)U(t,\tau)x_\tau\mid U(t,\tau)x_\tau)
    \\
    \nonumber
    &\leq \frac{M_T}{\beta}\|U(t,\tau)x_\tau\|_t^2,
    \qquad \tau\leq t\leq T,
   \end{align}
   and using Gronwall's inequality.
 \end{enumerate}
\end{proof}
The inequality \eqref{ExpStabIneq} will be essential in the next section where we study the exponential stability of the evolution family $\U$ generated by non-autonomous port-Hamiltonian systems.
\begin{remark}$(i)$\ Under the assumption of Theorem \ref{main Thm1} it is easy to see that the evolution family $\U$ generated by ${\A}:=\{A B(t) \mid t\geq 0\}$ is locally exponentially bounded. In fact, taking $s=\tau$ in \eqref {ExpStabIneq} and using \eqref{eq LocEquiNorm} we obtain that 
\begin{align*}\|U(t,s)x\|^2&\leq \frac {1}{\beta} \|U(t,s)x\|_t^2 \leq  \frac {1}{\beta} e^{\frac{M_T}{\beta}(t-s)}\|x\|_s^2
\leq \frac {\beta_T}{\beta} e^{\frac{M_T}{\beta}(t-s)}\|x\|^2, \quad x \in X
\end{align*}
for all $T>0$ and each $0\leq s\leq t\leq T.$ Recall that all  strongly continuous semigroups are exponentially bounded. This is, however, not the case for general evolution families, cf. [EN00, Section VI.9].
\medskip
\par\noindent $(ii)$\  If in addition $\dot B(t)\leq 0$ (in the sense that $\dot B(t) = \dot B(t)^\ast$ and $(\dot B(t) x \mid x) \leq 0$ for all $x \in X$) for all $t\geq 0$, then \eqref{locBddEVF}
implies that $t\mapsto \|B^{1/2}(t)U(t,s)x\|$ is decreasing on $[s,\infty)$ for each $s\geq 0$ and $x\in X.$ This can be seen as a generalization of \cite[Lemma 7.2.3]{Jac-Zwa12} to the non-autonomous setting. 
\end{remark}
 \begin{corollary}
 \label{cor:passive_phs}
  In the situation of Theorem \ref{main Thm1}, consider an additional additive term $K \in C^1(\R_+; \L(X))$, and the corresponding non-autonomous Cauchy problems
   \begin{equation}
    \dot x(t) - A B(t) x(t) - K(t) x(t)
     = 0
     \quad \text{a.e.\ on } [s, \infty),
     \quad
     x(s) = x_s, \, (s \geq 0).
     \label{eqn:NACP}
   \end{equation}
  These are $C^1$-wellposed, and their respective solutions are given by an evolution family $\mathcal{U} = (U(t,s))_{0 \leq s \leq t}$.
  Moreover, if $K$ is such that
   \[
    B(t) K(t) + K(t)^\ast B(t) + \dot B(t) \leq 0
   \]
  is negative semi-definite for all $t \geq 0$, then
   \[
    \|U(t,\tau) x\|_t
     \leq \|U(s,\tau) x\|_s,
     \quad
     \text{for all } 0 \leq \tau \leq s \leq t, \, x \in X.
   \]
  Note that the latter inequality holds true, in particular for $K = 0$ and $\dot B(t) \leq 0$.
 \end{corollary}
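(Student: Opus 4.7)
The plan is to follow exactly the strategy of Theorem \ref{main Thm1}, treating $K$ as a bounded, $C^1$ perturbation that leaves every estimate intact. For $C^1$-wellposedness I would perform the similarity transformation $y(t) := B(t) x(t)$ from step 1 of the sketch of Theorem \ref{main Thm1}: a direct computation shows that $x$ solves \eqref{eqn:NACP} if and only if $y$ solves
\begin{equation*}
\dot y(t) = \bigl[B(t) A + \dot B(t) B(t)^{-1} + B(t) K(t) B(t)^{-1}\bigr] y(t)
\end{equation*}
on the time-independent domain $D(A)$. Since $B \in C^2(\R_+;\L(X))$ is coercive and $K \in C^1(\R_+;\L(X))$, the extra summand $B(\cdot) K(\cdot) B(\cdot)^{-1}$ is a locally bounded, $C^1$-function into $\L(X)$. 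Hence Proposition \ref{Proposition Kato stability test} and the Kato-stability argument used in Theorem \ref{main Thm1} apply verbatim, producing an evolution family $\V$ for the transformed problem; undoing the similarity yields an evolution family $\U$ with $U(t,s) = B(t)^{-1} V(t,s) B(s)$ that solves \eqref{eqn:NACP} with regularity spaces $D(AB(t))$.

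For the contraction estimate I would fix $\tau \geq 0$, take $x \in D(AB(\tau))$, set $z(t) := U(t,\tau) x$ and $v(t) := B(t) z(t) \in D(A)$, and differentiate $\varphi(t) := \|z(t)\|_t^2 = (B(t) z(t) \mid z(t))$. Plugging in the evolution equation for $z$ and using self-adjointness of $B(t)$, the three terms produced by the product rule organise into
\begin{equation*}
\dot\varphi(t) = 2 \Re (A v(t) \mid v(t)) + \bigl( [\dot B(t) + B(t) K(t) + K(t)^\ast B(t)] z(t) \mid z(t) \bigr).
\end{equation*}
The first summand is non-positive by dissipativity of $A$ (Lumer--Phillips), and the second is non-positive by the standing hypothesis on $K$. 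Thus $\varphi$ is non-increasing on $[\tau,\infty)$, giving the claimed inequality on the dense subspace $D(AB(\tau))$; a standard density argument, using local boundedness of $U(\cdot,\tau)$ from Theorem \ref{main Thm1} and continuity of $B(\cdot)$, extends the inequality to all $x \in X$.

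The only point I would expect to demand any real care, and the heart of the proof, is verifying that the conjugated perturbation $B(\cdot) K(\cdot) B(\cdot)^{-1}$ still enters as a locally bounded, $C^1$ family, so that steps 1 and 2 of the proof of Theorem \ref{main Thm1} go through without any change; once this is in place, both the wellposedness and the monotonicity statement follow from the computation above. The closing remark on the special case $K \equiv 0$ with $\dot B(t) \leq 0$ is then immediate, since the hypothesis on $K$ reduces to exactly $\dot B(t) \leq 0$.
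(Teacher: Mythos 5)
Your proposal is correct and follows essentially the same route as the paper: well-posedness via the similarity transformation of Theorem \ref{main Thm1} combined with the perturbation part of Proposition \ref{Proposition Kato stability test}, and the contraction property by differentiating $\|U(t,\tau)x\|_t^2$, splitting the derivative into the dissipative term $2\Re(Av\mid v)$ with $v = B(t)U(t,\tau)x$ and the term governed by $B K + K^\ast B + \dot B$, then concluding by density. Your write-up is in fact slightly more explicit than the paper's about where the operator $A$ acts in the dissipativity step.
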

 \begin{proof}
  In view of the perturbation result Proposition \ref{Proposition Kato stability test}, the well-posedness and existence of an evolution family follows by a similiar argument as in the proof of Theorem \ref{main Thm1}.
  For the contraction property, one calculates for any $0 \leq \tau \leq s \leq t$ and $x \in D(A B(\tau))$ that
   \begin{align*}
    \frac{\d}{\d t} \|U(t,\tau) x\|_t^2
     &= (\frac{\partial}{\partial t} U(t,\tau) x \mid B(t) U(t,\tau) x)
      + (U(t,\tau) x \mid B(t) \frac{\partial}{\partial t} U(t,\tau) x)
      + (U(t,\tau) x \mid \dot B(t) U(t,\tau) x)
      \\
     &= 2 \Re \, (A U(t,s) x \mid U(t,s) x)
      + 2 \Re (B(t) K(t) U(t,s) x \mid U(t,s) x)
      + (\dot B(t) U(t,s) x \mid U(t,s) x)
      \\
     &\leq ([B(t) K(t) + K(t)^\ast B(t) + \dot B(t)] U(t,s) x \mid U(t,s) x)
   \end{align*}
  by dissipativity of the operator $A$.
  If $B(t) K(t) + K(t)^\ast B(t) + \dot B(t) \leq 0$, the right-hand side is non-positive, and the assertion follows by density of $D(A B(\tau))$ in $X$.
 \end{proof}
%
\section{Non-autonomous port-Hamiltonian systems}\label{Section 2}
In this section, we are concerned with the linear non-autonomous port-Hamiltonian system  \eqref{Nport-Hamiltonian system}-\eqref{NBoundary control} introduced in Section \ref{Section Introduction}.  Recall that in this case we have $X=L^2(a,b;\K^n)$ with the standard $L_2$-inner product. Throughout this section, we always assume the following:
\begin{assumption}\label{Assumption1}\end{assumption}
\begin{itemize}
	\item  [$(i)$]$\Re P_0 := \frac{P_0 + P_0^*}{2} \leq 0$ is negative semi-definite.
	\item  [$(ii)$] $P_1$ is invertible and self-adjoint.
	\item  [$(iii)$] $\tilde W_B \Sigma \tilde W_B^*\geq 0$ and $\operatorname{rank} \ \tilde W_B=n$
 \item  [$(iv)$] $\H\in C^2([0,\infty);L^\infty(a,b;\K^{n\times n}))$ and there exist $m, M\geq 0$ such that
 \begin{equation}\label{coercivity H}
 m \leq \H(t,\xi)=\H^*(t,\xi)\leq M, \quad {\rm a.e. }\ \xi\in [a,b], t\geq 0.
 \end{equation}
   \item[$(v)$] $K \in C^1([0,\infty); L^\infty(a,b; \K^{n \times n}))$
\end{itemize}
As in the introduction, we consider the (unperturbed) operator $A: D(A) \subseteq X \rightarrow X$ defined by
 \begin{equation}
  A x
   = P_1 \frac{\partial}{\partial \z} x + P_0 x,
   \quad
   x \in D(A) = \{x \in H^1(a,b; \K^n): \, \tilde W_B \left( \begin{smallmatrix} x(a) \\ x(b) \end{smallmatrix} \right) = 0 \}
   \label{def:A}
 \end{equation}
and its multiplicative perturbations $A \H(t)$ with time-varying domain $D(A \H(t)) = \{x \in X_{\H(t)}: \, \H(t) x \in D(A)\}$.
\begin{remark}
\begin{itemize}
  \item[(i)]
   The constant matrix $P_0$ can be replaced by a matrix valued funtion \[ P_0 \in C([0,\infty); L^\infty(a,b;\K^{n \times n})) \] and without any restriction on the dissipativity of $P_0$.
   In this case the operator $A(t)$ may depend on the time variable $t$, yet its domain is independent of $t$ as for every fixed $t \in [0,T]$, the matrix valued function $P_0(t,\cdot)\in \L(X)$ is just a bounded perturbation of the corresponding operator for $P_0 = 0$.
   However, $A$ does not generate a contractive $C_0$-semigroup on $L^2(a,b;\K^{n \times n})$ unless the symmetric part $\Re P_0(t,\cdot) \leq 0$ is negative-semidefinite a.e.\,; cf.\, \cite{Aug-Jac14}.
   On the other hand, w.l.o.g.\ time-varying parts of $P_0$ may be absorbed into the time-varying perturbation $K(t,\zeta)$, so w.l.o.g.\ one may even assume that $P_0 = 0$.
  \item[(ii)]
   $P_1$ being invertible ensures that $H^1(a,b;\K^n)$ is the maximal domain for the differential operator $P_1 \frac{\partial}{\partial \z} + P_0$ on $L^2(a,b;\K^n)$, i.e.\ $A$ is a closed operator (otherwise it could never be the generator of a semigroup).
  \item[(iii)]
   $A$ is dissipative if and only if $\Re P_0 \leq 0$ is negative and $W_B \Sigma W_B^* \geq 0$ is positive semi-definite, and in this case $A$ defined below already generates a contractive $C_0$-semigroup on $L^2(a,b;\K^n)$.
  \item[(iv)]
   The assumption on $\H$ ensures that the family of operators $B(t) := \H(t,\cdot)$ acts as matrix multiplication operators on $L^2(a,b;\K^n)$ and satisfies the assumptions of Section \ref{Section1}.
 \end{itemize}
\end{remark}

\subsection{$C^1$-wellposedness and exponential stability}

\begin{theorem}\label{Main Theorem 2}
Let Assumption \ref{Assumption1} hold true.
Then, the non-autonomous port-Hamiltonian system \eqref{Nport-Hamiltonian system}-\eqref{NBoundary control} is $C^1$-well posed with regularity spaces 
\[Y_t=\Big\{x\in L^2(a,b;\K^n)\mid \H(t,\cdot)x\in H^1(a,b;\K^n) \text{ and } \tilde W_B\left[\begin{array}{c}
\H(t,b)x(t,b) \\
\H(t,a)x(t,a) \\
\end{array}\right]=0 \Big\}, \quad t \geq 0\]
Moreover, for each compact interval $[0,T]$ and every classical solution $x$ of \eqref{Nport-Hamiltonian system}-\eqref{NBoundary control} we have
\begin{equation}\label{key formula}
\|x(t)\|_t^2\leq e^{c_T(t-s)}\|x(s)\|_s^2\quad (0\leq s\leq t\leq T),
\end{equation}
for some constant $c_T\geq 0$ that depends only on $m$,  $\underset{t\in[0,T]}{\max}\|\dot \H(t,\cdot)\|$ and $\underset{t \in [0,T]}{\max}\|K(t)\|$.
 If, additionally,
  \begin{equation}
   \H(t,\zeta) K(t,\zeta) + K^\ast(t,\zeta) \H(t, \zeta) + \frac{\partial}{\partial t} \H(t, \zeta)
    \leq 0,
    \quad
    \text{for every } t \geq 0 \text{ and a.e.\ } \z \in [a,b],
    \label{ineq:constraint}
  \end{equation}
 then
  \[
   \|x(t)\|_t
    \leq \|x(s)\|_s,
    \quad
    \text{for } 0 \leq s \leq t
  \]
 holds for every classical solution $x$ of \eqref{Nport-Hamiltonian system}--\eqref{NBoundary control}.
\end{theorem}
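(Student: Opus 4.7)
The plan is to recast the PDE \eqref{Nport-Hamiltonian system}--\eqref{NBoundary control} as an instance of the abstract non-autonomous Cauchy problem \eqref{eqn:NACP} and then to invoke Theorem \ref{main Thm1} together with Corollary \ref{cor:passive_phs}. Concretely, I would take $X = L^2(a,b;\K^n)$, let $A\colon D(A) \subseteq X \to X$ denote the autonomous port-Hamiltonian operator from \eqref{def:A}, and let $B(t), K(t) \in \L(X)$ be the matrix multiplication operators induced by $\H(t,\cdot)$ and $K(t,\cdot)$. A direct computation then shows that $D(AB(t)) = \{x \in X : B(t)x \in D(A)\}$ coincides with the regularity space $Y_t$ from the statement, and the PDE becomes $\dot x(t) = A B(t) x(t) + K(t) x(t)$ on $X$.

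The first step is to verify that $A$ generates a contractive $C_0$-semigroup on $X$: under Assumption \ref{Assumption1}(i)--(iii) this is precisely the classical matrix characterisation for autonomous port-Hamiltonian systems recalled in Section \ref{Section Introduction}. Secondly, I would check that $B \in C^2([0,\infty); \L(X))$ is pointwise self-adjoint and uniformly coercive with constant $m>0$. All of this is immediate from Assumption \ref{Assumption1}(iv), because multiplication by a function in $L^\infty(a,b;\K^{n\times n})$ defines a bounded self-adjoint operator on $L^2(a,b;\K^n)$ whose norm equals the essential supremum of the pointwise operator norm; the pointwise $C^2$-regularity therefore carries over to $B$ viewed as a map into $\L(X)$. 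In the same way Assumption \ref{Assumption1}(v) yields $K \in C^1([0,\infty); \L(X))$.

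These verifications place us exactly in the hypotheses of Theorem \ref{main Thm1} and Corollary \ref{cor:passive_phs}, which immediately provide $C^1$-well-posedness with regularity spaces $Y_t = D(AB(t))$ together with an evolution family $\U$ giving the classical solutions. For the local energy estimate \eqref{key formula}, I would reproduce the differentiation from the proof of Corollary \ref{cor:passive_phs}: for a classical solution $x$,
\[
\frac{\d}{\d t} \|x(t)\|_t^2 = 2\Re (A B(t) x(t) \mid B(t) x(t)) + 2\Re (B(t) K(t) x(t) \mid x(t)) + (\dot B(t) x(t) \mid x(t)).
\]
The first term is non-positive by dissipativity of $A$ (recall $B(t) x(t) \in D(A)$ since $x(t) \in Y_t$), and the remaining two are controlled on $[0,T]$ by a multiple of $\|x(t)\|_t^2$ via the coercivity of $B$ combined with $\max_{[0,T]}\|\dot B(t)\|$ and $\max_{[0,T]}\|K(t)\|$. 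Gronwall's inequality then yields the desired constant $c_T$.

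For the contraction assertion under \eqref{ineq:constraint}, the essential observation is that the pointwise matrix inequality translates verbatim into the operator inequality $B(t) K(t) + K(t)^\ast B(t) + \dot B(t) \leq 0$ on $X$, because all three terms act as matrix multiplication operators and
\[
([B(t) K(t) + K(t)^\ast B(t) + \dot B(t)] x \mid x) = \int_a^b \bigl( [\H K + K^\ast \H + \partial_t \H](t,\z) x(\z) \mid x(\z) \bigr) \d \z \leq 0.
\]
Corollary \ref{cor:passive_phs} then delivers $\|x(t)\|_t \leq \|x(s)\|_s$ for $0 \leq s \leq t$. I do not foresee any serious obstacle; the most delicate piece of book-keeping is confirming that Assumption \ref{Assumption1}(iii), phrased in terms of $\tilde W_B$ rather than the transformed matrix $W_B$, is equivalent to maximal dissipativity of $A$ on $L^2(a,b;\K^n)$, but this is a standard boundary-form computation.
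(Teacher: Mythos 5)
Your proposal is correct and follows essentially the same route as the paper: the paper's proof likewise identifies $\H(t,\cdot)$ and $K(t,\cdot)$ with multiplication operators on $L^2(a,b;\K^n)$, cites the generation of a contraction semigroup by $A$ (via \cite[Theorem 1.1]{Jac-Mor-Zwa15}), and then invokes Theorem \ref{main Thm1} and Corollary \ref{cor:passive_phs}. You merely fill in more of the routine verifications (coercivity and regularity of $B$, the Gronwall step, and the pointwise-to-operator translation of \eqref{ineq:constraint}) that the paper leaves implicit.
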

 \begin{remark}
  Note that for the special case $K = 0$, condition \eqref{ineq:constraint} just means that $\partial_t \H(t,\zeta) \leq 0$ for every $t \geq 0$ and a.e.\ $\z \in [a,b]$.
 \end{remark}
\begin{proof}[Proof of Theorem \ref{Main Theorem 2}]
The operator $A$ with domain $D(A)$ as defined in \eqref{def:A} generates a contraction $C_0$-semigroup on $X$ by \cite[Theorem 1.1]{Jac-Mor-Zwa15}. The claim then follows from Theorem \ref{main Thm1} and Corollary \ref{cor:passive_phs} since $L^{\infty}(a,b;\K^{n\times n})$ can be seen as a subspace of $\L(X)$ by identifying a function $F\in L^{\infty}(a,b;\K^{n\times n})$ with multiplication operator $x\mapsto Fx$ acting on $X$.
\end{proof}
Under slightly more restrictive regularity conditions, we are able to state the following uniform exponential stability theorem, provided dissipative boundary conditions are imposed.

\begin{theorem}\label{main Theorem Stability}
Let Assumption \ref{Assumption1} hold true, and assume that the evolution family $\mathcal{U} = (U(t,s))_{t \geq s \geq 0}$ generated by $\mathcal{A} = \{A \H(t,\cdot): \, t \geq 0\}$ is contractive, i.e.\ it holds that
 \begin{equation}
  \|U(t,s) x\|_t
   \leq \|x\|_s,
   \quad
   0 \leq s \leq t, \, x \in X.
   \label{ineq:contractivity}
 \end{equation}
In addition, assume that
 \[
  \H \in C^2([0,\infty); C([a,b];\K^n)) \cap C_b^1([0,\infty); C([a,b];\K^n)) \cap L^\infty([0,\infty); \operatorname{Lip}([a,b];\K^n)). 
 \]
Assume that there exists $\kappa> 0$ such that one of the following two conditions holds for all $x \in D(A)$:
 \begin{align}
  \Re \, (A x \mid x)
   &\leq - \kappa |x(b)|^2
   \label{Main stabThmCond1}
   \\
  \Re \, (A x \mid x)
   &\leq - \kappa |x(a)|^2.
   \label{Main stabThmCond2}
 \end{align}
Then the system (\ref{Nport-Hamiltonian system})-(\ref{NBoundary control}) is uniformly exponentially stable, i.e there are constants $\omega<0$ and $L\geq 1$ such that for all classical solutions $x$ of (\ref{Nport-Hamiltonian system})-(\ref{NBoundary control})
\begin{equation}\label{Them stability estimate}
\|x(t)\|\leq L e^{\omega (t-s)}\|x(s)\|,\qquad \text{ for all } 0 \leq s \leq t.
\end{equation}
\end{theorem}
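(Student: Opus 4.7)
My plan is to adapt the Cox--Zuazua multiplier argument used in the autonomous setting (\cite[Theorem III.2]{Vi-Zw-Go-M}, \cite[Theorem 9.1.3]{Jac-Zwa12}) to the present non-autonomous system, exploiting the contractivity hypothesis \eqref{ineq:contractivity} and the stronger regularity of $\H$ in order to keep all constants uniform in $s$. By the composition law $U(t, s) = U(t, r) U(r, s)$ and the uniform equivalence $m \|\cdot\|^2 \leq \|\cdot\|_t^2 \leq M \|\cdot\|^2$ from Assumption~\ref{Assumption1}~$(iv)$, it suffices to exhibit some $T > 0$ and $c_0 \in [0, 1)$ such that $\|U(s + T, s) x_s\|_{s+T}^2 \leq c_0 \|x_s\|_s^2$ for all $s \geq 0$ and $x_s \in X$. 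Iteration over $[s + kT, s + (k + 1) T]$ then yields \eqref{Them stability estimate}.

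Fix $s \geq 0$ and a classical solution $x(t) := U(t, s) x_s$ on $[s, s + T]$, so that $y(t) := \H(t, \cdot) x(t) \in D(A)$. The energy identity derived in the proof of Corollary~\ref{cor:passive_phs} reads
\[
\frac{d}{dt} \|x(t)\|_t^2 = 2 \Re (A y(t), y(t))_{L^2} + \big( [\H K + K^\ast \H + \partial_t \H](t) \, x(t), x(t) \big).
\]
Combining the dissipation bound $2 \Re (A y, y)_{L^2} \leq -2 \kappa |y(t, b)|^2 \leq -2 \kappa m^2 |x(t, b)|^2$ (obtained from \eqref{Main stabThmCond1} and the uniform coercivity $\H \geq m I$) with the contractivity hypothesis, which controls the $\H K + K^\ast \H + \partial_t \H$--term (pointwise non-positive under the sufficient condition of Corollary~\ref{cor:passive_phs}, or in integral form via the boundedness of $K$ and $\partial_t \H$), an integration over $[s, s + T]$ produces a boundary observability estimate of the form
\[
\int_s^{s + T} |x(t, b)|^2 \d t \leq C_{\mathrm{obs}} \|x_s\|_s^2,
\]
with $C_{\mathrm{obs}}$ independent of $s$ and $x_s$. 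The case of \eqref{Main stabThmCond2} is analogous with $b$ replaced by $a$.

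The heart of the argument is the Cox--Zuazua-type multiplier identity. Introduce the Lyapunov-like functional
\[
F(t) := 2 \int_a^b (\zeta - a) \big( x(t, \zeta), \H(t, \zeta) x(t, \zeta) \big) \d \zeta
\]
(with the weight $(b - \zeta)$ in case \eqref{Main stabThmCond2}), so that $0 \leq F(t) \leq 2(b - a) \|x(t)\|_t^2$. Differentiating in time, substituting \eqref{Nport-Hamiltonian system}, using $P_1 = P_1^\ast$ together with $\partial_\zeta (y, P_1 y) = 2 \Re (P_1 \partial_\zeta y, y)$, and integrating by parts in $\zeta$, one obtains
\[
\tfrac{1}{2} \dot F(t) = (b - a) \big( y(t, b), P_1 y(t, b) \big) - \int_a^b (y, P_1 y) \d \zeta + \int_a^b (\zeta - a) \mathcal{R}(t, \zeta) \d \zeta,
\]
where $\mathcal{R} = 2 \Re (P_0 y, y) + 2 \Re (K x, y) + (x, \partial_t \H x)$. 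Assumption~\ref{Assumption1}~$(i)$ renders the $P_0$-contribution non-positive, while the boundedness of $K$ and of $\partial_t \H$ (guaranteed by $\H \in C_b^1([0, \infty); C([a, b]; \K^n))$) dominates the remainder by $C_0 \|x(t)\|_t^2$ uniformly in $s$. Carrying over the indefinite-$P_1$ absorption step from the autonomous proof in \cite[Theorem III.2]{Vi-Zw-Go-M} (a pointwise-in-$t$ matrix argument that relies only on $\H(t, \cdot) \geq m I$ and hence remains valid in the non-autonomous set-up) and integrating on $[s, s + T]$, I arrive at the Cox--Zuazua-type inequality
\[
c_1 \int_s^{s + T} \|x(t)\|_t^2 \d t \leq C_1 \big( \|x(s)\|_s^2 + \|x(s + T)\|_{s + T}^2 \big) + C_2 \int_s^{s + T} |x(t, b)|^2 \d t,
\]
with $c_1, C_1, C_2 > 0$ all independent of $s$, $T$ and $x_s$.

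To conclude, contractivity gives $\|x(t)\|_t \geq \|x(s + T)\|_{s + T}$ for $s \leq t \leq s + T$, so the left-hand side is bounded from below by $c_1 T \|x(s + T)\|_{s + T}^2$; combining with $\|x(s + T)\|_{s + T} \leq \|x_s\|_s$ and the boundary observability estimate yields
\[
c_1 T \|x(s + T)\|_{s + T}^2 \leq \big( 2 C_1 + C_2 C_{\mathrm{obs}} \big) \|x_s\|_s^2.
\]
Choosing $T$ so large that $(2 C_1 + C_2 C_{\mathrm{obs}}) / (c_1 T) < 1$ furnishes the desired $c_0 \in [0, 1)$. The main technical obstacle is the third step, namely ensuring genuine $s$-independence of $c_1, C_1, C_2$: this is precisely what the stronger regularity hypotheses $\H \in C_b^1([0, \infty); C([a, b]; \K^n)) \cap L^\infty([0, \infty); \operatorname{Lip}([a, b]; \K^n))$ and the boundedness of $K$ are tailor-made to deliver, and the transfer of the indefinite-$P_1$ absorption trick from the autonomous setting to the time-dependent Hamiltonian density $\H(t, \zeta)$ requires the most care.
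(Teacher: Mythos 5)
Your overall skeleton agrees with the paper's: establish a uniform-in-$s$ contraction factor $\|x(s+T)\|_{s+T}^2 \leq c_0\|x(s)\|_s^2$ with $c_0<1$ over a fixed window and iterate, and your ``boundary trace'' estimate $\int_s^{s+T}|x(t,b)|^2\d t\leq C_{\mathrm{obs}}\|x(s)\|_s^2$ (dissipation condition plus monotone energy) is exactly the easy half of the paper's argument. The gap is in the hard half. The paper's key tool is the finite observability estimate of Lemma \ref{Lemma Stability}, proved by a \emph{sidewise} energy argument: one sets $F(\zeta)=\int_{\gamma(b-\zeta)}^{\tau-\gamma(b-\zeta)}(x(t,\zeta)\mid\H(t,\zeta)x(t,\zeta))\d t$, shows $\frac{\d}{\d\zeta}F\geq-\kappa_\tau F$ using the PDE to trade $\partial_\zeta$ for $\partial_t$, and applies Gronwall \emph{in the spatial variable}; the indefiniteness of $P_1$ never enters because the quantity propagated is the non-negative density $(x\mid\H x)$. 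You instead propose a time-direction multiplier $F(t)=2\int_a^b(\zeta-a)(x\mid\H x)\d\zeta$. Your own computation shows that $\dot F$ produces the bulk term $-\int_a^b(y\mid P_1 y)\d\zeta$ with $y=\H x$, and since $P_1$ is merely self-adjoint and invertible (for the wave equation $P_1=\left(\begin{smallmatrix}0&1\\1&0\end{smallmatrix}\right)$), this term is \emph{indefinite}: it cannot be bounded above by $-c_1\|x(t)\|_t^2$, so the claimed inequality $c_1\int_s^{s+T}\|x(t)\|_t^2\d t\leq C_1(\cdots)+C_2\int|x(t,b)|^2\d t$ does not follow. The ``indefinite-$P_1$ absorption step from the autonomous proof'' that you invoke to fix this does not exist in \cite{Vi-Zw-Go-M} or \cite[Lemma 9.1.2]{Jac-Zwa12} — those proofs are precisely the sidewise argument above, not a time-multiplier identity. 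Even replacing your weight by the more natural $\int(\zeta-a)(x\mid P_1^{-1}x)\d\zeta$, which does produce $-\|x(t)\|_t^2$, the lower-order contributions from $P_0$, $K$ and $\partial_\zeta\H$ generate a term $C_0\int_s^{s+T}\|x(t)\|_t^2\d t$ with $C_0$ not small, which cannot be absorbed; this is exactly the obstruction the Cox--Zuazua device is designed to circumvent.

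A secondary point: your parenthetical fallback of controlling the $\H K+K^\ast\H+\partial_t\H$ term ``in integral form via the boundedness of $K$ and $\partial_t\H$'' cannot work, since it yields a constant growing linearly in $T$ and destroys the $T$-independence of $C_{\mathrm{obs}}$; the paper's counterexample in Example \ref{exa:counter-exa} shows that mere boundedness of the evolution family together with \eqref{Main stabThmCond1} is genuinely insufficient. You must use the pointwise condition \eqref{ineq:constraint}, which is what the contractivity hypothesis \eqref{ineq:contractivity} encodes. To repair the proof, replace your third step by the paper's Lemma \ref{Lemma Stability} (whose constants, as noted in Remark \ref{Remark Independance of Constants}, are independent of the initial time $s$), and the rest of your argument closes.
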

For the proof of this stability theorem, we need the following lemmata. First, we have the following finite observability estimate.
\begin{lemma}[Finite observability estimate] \label{Lemma Stability}Assume that the conditions of Theorem \ref{Main Theorem 2} hold. In addition assume that $\H(t,\cdot)$ is Lipschitz continuous uniformly in time on $[a,b],$ i.e., $\frac{\partial}{\partial \z}\H\in L^\infty([0,\infty)\times [a,b];\K^{n\times n}).$ Then there exist constants $\tau>0$ and $C_\tau>0$ such that for each classical solution $x$ of (\ref{Nport-Hamiltonian system}) we have
\begin{eqnarray}
\label{Eq1: stabilityLemma}\|x(\tau)\|^2_\tau&\leq& C_\tau\int_0^\tau|\H(t,b)x(t,b)|^2 \d t,\\
\label{Eq2: stabilityLemma} \|x(\tau)\|^2_\tau&\leq& C_\tau\int_0^\tau|\H(t,a)x(t,a)|^2 \d t.
\end{eqnarray}
\end{lemma}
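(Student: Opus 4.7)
My plan is to adapt the multiplier (Lyapunov function) method used for the autonomous port-Hamiltonian case in \cite{Vi-Zw-Go-M, Jac-Zwa12}, treating the new terms arising from the time-dependence of $\H$ and the perturbation $K$ as controlled remainders by means of the $L^\infty$-bounds provided by the hypotheses.

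Setting $z(t,\zeta) := \H(t,\zeta) x(t,\zeta)$, the PDE \eqref{Nport-Hamiltonian system} reads $\partial_t x = P_1 \partial_\zeta z + P_0 z + K x$ with the time-independent boundary condition $\tilde W_B (z(t,a), z(t,b))^T = 0$. I would introduce a Lyapunov functional
\[
F(t) := \int_a^b \bigl( z(t,\zeta) \mid Q(\zeta) z(t,\zeta) \bigr)\, d\zeta
\]
with a self-adjoint $C^1$ matrix-valued weight $Q: [a,b] \to \K^{n\times n}$ (a scalar weight $f(\zeta) I$ sometimes suffices, but a matrix weight is generally needed to accommodate the sign-indefiniteness of $P_1$). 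Because $P_0, P_1, \tilde W_B$ are time-independent, $Q$ can be constructed exactly as in the autonomous case, so as to achieve three properties: (i) $F(t)$ is equivalent to $\|x(t)\|_t^2$ uniformly in $t$, using $m \leq \H \leq M$; (ii) after integration by parts in $\zeta$, the boundary contribution localizes at $\zeta = b$ and is bounded by $C_1 |\H(t,b) x(t,b)|^2$, thanks to $\tilde W_B \Sigma \tilde W_B^* \geq 0$ and $\operatorname{rank} \tilde W_B = n$ combined with the boundary condition itself (used to eliminate $z(t,a)$); (iii) the resulting bulk term $-\int_a^b (z \mid \partial_\zeta(Q P_1) z)\, d\zeta$ is coercive, bounded below by $c \|x(t)\|_t^2$ for some $c > 0$.

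Differentiating $F$ along a classical solution, using $\partial_t z = (\partial_t \H) x + \H \partial_t x$ and integrating by parts in $\zeta$, produces an identity of the form
\[
\dot F(t) = B_b(t) - \int_a^b \bigl( z \mid \partial_\zeta(Q P_1) z \bigr)\, d\zeta + R(t),
\]
where $B_b(t) \leq C_1 |\H(t,b) x(t,b)|^2$ is the boundary contribution and $R(t)$ gathers the terms from $P_0$, $K$, $\partial_t \H$, and from commuting $\partial_\zeta$ past $\H$. The $L^\infty$-assumptions on $K, \partial_t \H, \partial_\zeta \H$ together with the coercivity of $\H$ yield $|R(t)| \leq C_R \|x(t)\|_t^2$, with $C_R$ depending only on the data. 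Combining with (iii) produces the differential inequality
\[
c \|x(t)\|_t^2 \leq -\dot F(t) + C_1 |\H(t,b) x(t,b)|^2 + C_R \|x(t)\|_t^2.
\]
Integrating on $[0, \tau]$ and using $F(t) \sim \|x(t)\|_t^2$ together with the forward energy bound \eqref{key formula} of Theorem \ref{Main Theorem 2}, which after integrating $\|x(\tau)\|_\tau^2 \leq e^{c_T(\tau - s)} \|x(s)\|_s^2$ over $s \in [0, \tau]$ yields $\tau e^{-c_T \tau} \|x(\tau)\|_\tau^2 \leq \int_0^\tau \|x(s)\|_s^2\, ds$, one arrives at \eqref{Eq1: stabilityLemma} for $\tau$ sufficiently large to absorb both the remainder factor $C_R$ and the contribution of $F(0)$. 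The symmetric estimate \eqref{Eq2: stabilityLemma} at $\zeta = a$ follows by the mirrored choice of weight.

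The main obstacle is not the non-autonomous character, since the additional terms are routinely bounded via the $L^\infty$-assumptions, but rather the structural construction of the matrix weight $Q(\zeta)$ satisfying (i)--(iii) simultaneously. This is already the technically demanding step in the autonomous setting and requires a careful interplay between the geometry of the boundary condition $\tilde W_B$ and the sign-indefiniteness of $P_1$; in the present setting the same construction applies, but one must verify that all attendant constants depend on $\H$ only through its $L^\infty$, Lipschitz and coercivity bounds, so that they are uniform in $t$.
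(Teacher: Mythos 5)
There is a genuine gap, and it lies exactly where you defer the work: the matrix weight $Q(\zeta)$ with properties (i)--(iii) does not exist in general for this class of systems, and it is not what the cited references construct. The paper (following Villegas et al.\ and Jacob--Zwart) does not use a fixed-time spatial functional at all; it uses the \emph{sidewise} energy
$F(\zeta)=\int_{\gamma(b-\zeta)}^{\tau-\gamma(b-\zeta)}(x(t,\zeta)\mid\H(t,\zeta)x(t,\zeta))\,\d t$
with slanted time limits, differentiates in $\zeta$, uses the PDE to turn the principal term into the exact time derivative $\partial_t(x^*P_1^{-1}x)$ (note: the \emph{constant} matrix $P_1^{-1}$, so no commutation with $\H(t,\zeta)$ is needed), absorbs the endpoint contributions by choosing $\gamma$ with $\gamma\H\pm P_1^{-1}\geq0$, and closes with a Gronwall inequality in $\zeta$, Fubini, and \eqref{key formula}. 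The reason this detour is unavoidable is that $P_1$ is indefinite (waves travel in both directions). In your scheme the integration by parts produces the boundary term $\big[z^*Q\H P_1 z\big]_a^b$ at each fixed time $t$, and its contribution at $\zeta=a$ contains a positively weighted outgoing component: already for the string with fixed end ($P_1=\left(\begin{smallmatrix}0&1\\1&0\end{smallmatrix}\right)$, diagonalized to $\operatorname{diag}(1,-1)$, boundary condition $z_1(t,a)=0$), positivity of $Q$ forces the $|z_2(t,a)|^2$ term to appear with a sign that cannot be absorbed, and the boundary condition at $a$ gives no pointwise-in-time bound of $z_2(t,a)$ by $z(t,b)$. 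The trace at $a$ is only seen at $b$ after a transport delay, which is precisely what the slanted limits $[\gamma(b-\zeta),\tau-\gamma(b-\zeta)]$ and the resulting minimal observation time $\tau>2\gamma(b-a)$ encode; your argument produces no such threshold, which is a structural signal that it cannot yield a one-sided observability estimate. (A further algebraic obstruction: for the principal term to integrate by parts cleanly you need $Q\H P_1$ self-adjoint pointwise in $(t,\zeta)$, a severe constraint when $\H$ is time-varying and does not commute with $P_1$.)

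Separately, even granting (i)--(iii), your final absorption step fails: after integrating over $[0,\tau]$ the remainder $C_R\int_0^\tau\|x(t)\|_t^2\,\d t$ scales with $\tau$ exactly like the good term $c\int_0^\tau\|x(t)\|_t^2\,\d t$, so taking $\tau$ large cannot absorb $C_R$; you would need $c>C_R$, but $C_R$ is fixed by the $L^\infty$ bounds on $K$, $\partial_t\H$, $\partial_\zeta\H$ while $c$ is constrained by the (already impossible) requirements on $Q$. In the paper's proof the analogous remainder enters only through the Gronwall factor $e^{\kappa_\tau(b-a)}$ over the \emph{spatial} interval, which is harmless, and the time-dependence of $\H$ enters only through \eqref{key formula} when converting $\int_a^bF(\zeta)\,\d\zeta$ into $(\tau-2\gamma(b-a))\|x(\tau)\|_\tau^2$. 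I recommend abandoning the spatial multiplier and redoing the argument along the sidewise-energy lines.
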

\begin{remark}
Note that for the finite observability estimate to hold true, contractivity of the evolution family $\mathcal{U}$ is not necessary, however, the constant $C_\tau$ in the estimates \eqref{Eq1: stabilityLemma} and \eqref{Eq2: stabilityLemma} heavily depends on $\tau$ whenever $\mathcal{U}$ is not bounded. In particular, for the constant $C_\tau$ caclulated in the proof below, one has $C_\tau \rightarrow \infty$ exponentially fast as $\tau \rightarrow \infty$, which prevents the proof of Theorem \ref{main Theorem Stability} from going through.
\end{remark}
\begin{proof}[Proof of Lemma \ref{Lemma Stability}] For the proof, we follow the same strategy as in \cite[Lemma III.1]{Vi-Zw-Go-M}.
Within the proof, we will use that at least on every bounded interval $[0, \tau]$ there is a constant $M_\tau \geq 1$ such that $\|U(t,s) x\|_t \leq M_\tau \|x\|_s$ for all $x \in D(A \H(s))$ and $0 \leq s \leq t \leq T$.
In concrete situations this constant $M_\tau$ may be given by $M_\tau = \mathrm{e}^{\omega \tau}$ (for general port-Hamiltonian systems, thus $M_\tau$ depending exponentially on $\tau$), or $M_\tau = M_0$ indepedent of $\tau$, e.g.\ if the constraint \eqref{ineq:constraint} holds true (in the latter case $M_0 = 1$).
Let $\gamma>0$ and $\tau>0$ be chosen such that $\tau>2\gamma(b-a).$ Let $x$ be a classical solution of  (\ref{Nport-Hamiltonian system}) and define the function $F:[a,b]\lra \R$ via
\[F(\zeta):=\int_{\gamma(b-\zeta)}^{\tau-\gamma(b-\zeta)} (x(t,\zeta) \mid \H(t,\zeta) x(t,\zeta)) \d t, \ \zeta\in[a,b].\]
Note that $F(b) = \int_0^\tau (x(t,b) \mid \H(t,b) x(t,b)) \d t$, and hence
 \[
  \frac{1}{M} \int_0^\tau |\H(t,b) x(t,b)|^2 \d t
   \leq F(b)
   \leq \frac{1}{m} \int_0^\tau |\H(t,b) x(t,b)|^2 \d t.
 \]
For simplicity we sometimes plainly write $x, \H$ instead of $x(t,\zeta), \H(t,\zeta)$.
W.l.o.g.\ we may and will absorb $P_0 \H$ into $K$, i.e.\ we assume that $P_0 = 0$ in the following.
Note that from $\H$ being Lipschitz-continuous it follows that $\H$ and $x$ are weakly differentiable with derivatives in $L^\infty$.
Then we have 
\begin{align*}
\frac{\d}{\d \zeta}F(\zeta)&=\int_{\gamma(b-\zeta)}^{\tau-\gamma(b-\zeta)}\Big[x^*\frac{\partial}{\partial\zeta}(\H x)+(\frac{\partial}{\partial\zeta}x)^*\H x\Big] \d t
\\&\qquad +\gamma (x^*\H x)(\tau-\gamma(b-\zeta),\zeta)
+\gamma (x^*\H x)(\gamma(b-\zeta),\zeta)
\\&= \int_{\gamma(b-\zeta)}^{\tau-\gamma(b-\zeta)}\Big[x^*P_1^{-1}\big(\frac{\partial x}{\partial t} -K x \big)+\Big(P_1^{-1}\frac{\partial}{\partial t}x-\frac{\partial\H}{\partial\zeta}x  - P_1^{-1} K(t,\xi) x \Big)^*x\Big] \d t
\\&\qquad+\gamma (x^*\H x)(\tau-\gamma(b-\zeta),\zeta)
+\gamma (x^*\H x)(\gamma(b-\zeta),\zeta)
\\&= \int_{\gamma(b-\zeta)}^{\tau-\gamma(b-\zeta)} \Big(x^*P_1^{-1}\frac{\partial x}{\partial t}+\frac{\partial x}{\partial t}^*P_1^{-1}x\Big) \d t
\\&\qquad - \int_{\gamma(b-\zeta)}^{\tau-\gamma(b-\zeta)} x^*\Big(K P_1^{-1} + P_1^{-1} K^\ast +\frac{\partial\H}{\partial\zeta}\Big)x \d t 
\\&\qquad+\gamma (x^*\H x)(\tau-\gamma(b-\zeta),\zeta)
+\gamma (x^*\H x)(\gamma(b-\zeta),\zeta).
\end{align*}
Here, we have used that $P_1$ is invertible and self adjoint, $x$ solves (\ref{Nport-Hamiltonian system}) and that $\H$ is self-adjoint as well.  Next, by the fundamental theorem of calculus  we have 
\begin{align*}
\int_{\gamma(b-\zeta)}^{\tau-\gamma(b-\zeta)} \Big(x^*(t,\zeta) P_1^{-1}\frac{\partial x}{\partial t}(t,\zeta) +\frac{\partial x}{\partial t}^*(t,\zeta) P_1^{-1}x(t,\zeta)\Big) \d t&=\Big[x^*(t,\zeta) P_1^{-1}x(t,\zeta)\Big]_{t=\gamma(b-\zeta)}^{\tau-\gamma(b-\zeta)}.
\end{align*}
Therefore,
\begin{align}\label{stabilityLemmaEq11}
\frac{\d}{\d \zeta}F(\zeta)&=- \int_{\gamma(b-\zeta)}^{\tau-\gamma(b-\zeta)} x^*\Big(\H P_0^*P_1^{-1} +P_0P_1^{-1}\H+\frac{\partial\H}{\partial\zeta}\Big)x \d t
\\\label{stabilityLemmaEq12}&\qquad  +\big[x^*(\gamma\H+P_1^{-1}) x\big](\tau-\gamma(b-\zeta),\zeta)+\big[x^*(\gamma\H-P_1^{-1}) x\big](\gamma(b-\zeta),\zeta).
\end{align}
Now, thanks to  \eqref{coercivity H} we can choose $\gamma$ large enough such that 
 \begin{equation}
\label{StabilityLemma Eq1}\pm P_1^{-1}+\gamma \H(t,\xi)\geq 0\qquad (a.e.\, \zeta\in [a,b], t\geq 0).
\end{equation}
Moreover, since $[\gamma(b-\zeta),\tau-\gamma(b-\zeta)]\subset [0,\tau]$ and since $\frac{\partial}{ \partial \zeta}\H\in L^\infty([0,\infty)\times [a,b]; \K^{n\times n})$ there exists $\kappa_\tau>0$ such that 
\begin{equation}\label{StabilityLemma Eq2}
\H(t,\zeta) P_0^*P_1^{-1} +P_0P_1^{-1}\H(t,\zeta)+\frac{\partial\H}{\partial\zeta}(t,\zeta)\leq \kappa_\tau \H(t,\zeta)
\end{equation}
for  a.e  $\zeta\in[a,b]$ and all $t\in [0,\tau].$
For example, we may choose
 \[
  \kappa_\tau
   := 2 \| P_0^* P_1^{-1} \| + \frac{1}{m} \| \frac{\partial \H}{\partial \z} \|_{L^\infty([0,\tau] \times [a,b]; \K^n)}
   \geq 0.
 \]
Inserting \eqref{StabilityLemma Eq1} and \eqref{StabilityLemma Eq2} into \eqref{stabilityLemmaEq11}-\eqref{stabilityLemmaEq12} we obtain that
\[\frac{\d}{\d \zeta}F(\zeta)\geq -\kappa_\tau F(\zeta)\]
holds for  a.e.\  $\zeta\in[a,b]$ and all $t\in [0,\tau].$ This implies that
\begin{equation}\label{StabilityLemma Eq3}
F(\zeta)\leq e^{\kappa_\tau(b-a)}F(b)\quad \text{ for all } \zeta\in[a,b], t\in [0,\tau]
\end{equation}
Using \eqref{key formula}, there exists a constant $c_\tau>0$ (which depends on the interval $[0,\tau]$, more precisely on $\max_{t \in [0,\tau]} \| \dot {\mathcal{H}}(t,\cdot)\|$ and $\frac{1}{m}$) such that 
\begin{align*}
(\tau-2\gamma(b-a))\|\H(\tau-\gamma(b-\zeta,\cdot) x(\tau-\gamma(b-\zeta))\|^2
&\leq e^{ c_\tau \tau}\int_{\gamma(b-\zeta)}^{\tau-\gamma(b-\zeta)}\|x(t)\|^2_t \d t
\\
&=e^{ c_\tau\tau}\int_a^b F(\zeta) \d\zeta
\leq F(b)e^{c_\tau\tau}e^{k_\tau(b-a)}(b-a).
\end{align*}
Here, we have used Fubini's theorem and estimate \eqref{StabilityLemma Eq3} to obtain the last inequality. Taking $\z = b$ and using that $\tau>2\gamma(b-a)$ we conclude 
\[\|x(\tau)\|^2_\tau \leq  F(b) \frac{e^{ c_\tau \tau + \kappa_\tau(b-a)}(b-a)}{\tau-2\gamma(b-a)}= \frac{e^{ c_\tau \tau + \kappa_\tau (b-a)}(b-a)}{\tau-2\gamma(b-a)} \int_0^\tau |\H(t,b) x(t,b)|^2 \d t.\]
This completes the proof of the desired inequality \eqref{Eq1: stabilityLemma} for the constant
 \[
  C_\tau := \frac{e^{ c_\tau \tau + \kappa_\tau(b-a)}(b-a)}{\tau-2\gamma(b-a)}
   > 0.
 \]
The second inequality \eqref{Eq2: stabilityLemma} can be obtained by the same technique.
\end{proof}

\begin{remark}\label{Remark Independance of Constants}
The reasoning in Lemma \ref{Lemma Stability} is by no means restricted to initial time $s=0$, as the constants chosen in the proof lead to inequalities also valid for initial times $s \geq 0$.
Therefore, one obtains the more general finite observability estimates
 \[
  \| x(s + \tau)\|^2_{s+\tau}
   \leq C_\tau \int_s^{s + \tau} |\H(t,b) x(t,b)|^2 \d t,
   \quad s \geq 0
 \]
and
 \[
  \| x(s + \tau)\|^2_{s+\tau}
   \leq C_\tau \int_s^{s + \tau} |\H(t,a) x(t,a)|^2 \d t,
   \quad
   s \geq 0
 \]
respectively, where the constant $C_\tau > 0$ does neither depend on $x$ nor on $s \geq 0$. We will use this property for the proof of Stability Theorem \ref{main Theorem Stability}.
\end{remark}
For the proof of Stability Theorem \ref{main Theorem Stability} one needs to show that there is some energy decay which can be suitably estimated by a time integral over the dissipation terms in conditions \eqref{Main stabThmCond1} and \eqref{Main stabThmCond2}.
In contrast to the situation, one faces the following additional challenges:
   The energy norm $\| \cdot \|_{\H(t)}$ does directly depend on the time-varying Hamiltonian density matrix function $\H(t,\zeta)$, thus is explicitly time-dependent.
   In Theorem \ref{Main Theorem 2} we have seen that, in general, the evolution family $\mathcal{U}$ is neither contractive on $X$ nor does the energy $H(t) = \frac{1}{2} \|U(t,s) x\|_t^2$ ($t \geq s$) decay.
   The best estimate known a priori is only a exponential boundedness estimate; see Theorem \ref{Main Theorem 2}.
   Therefore, the case of exponentially increasing energy somehow has to be excluded right from the start, which can be done by demanding property \eqref{ineq:constraint} which makes the system dissipative in the sense that $H(t)$ decay monotonically, or first showing by other means that the evolution family $\mathcal{U}$ is bounded, and building upon this property.
   As it turns out, however, boundedness of the evolution family still is not enough to deduce exponential stability under the dissipation conditions \eqref{Main stabThmCond1} or \eqref{Main stabThmCond2}; see the counterexample below.  
 As the following corollary shows, for large $\tau > 0$, the constant $C_\tau$ in the observability estimate may be chosen arbitrary small. We state this as a side remark; however, for the proof of Stability Theorem \ref{main Theorem Stability} it actually will not help, but we have to rely on the contraction property \eqref{ineq:contractivity}.
 \begin{corollary}[Small $C_\tau$ for large $\tau$]
  \label{cor:small_c_tau}
  In the situation of Lemma \ref{Lemma Stability}, for every $M_0 > 0$ and $\varepsilon > 0$ there is $\tau > 0$ such that
   \begin{equation}
    \| x(\tau + s) \|_{\tau + s}^2
     \leq C_\tau \int_s^{s + \tau} |\H(t,b) x(t,b)|^2 \d t,
     \quad
    \| x(\tau + s) \|_{\tau + s}^2
     \leq C_\tau \int_s^{s + \tau} |\H(t,a) x(t,a)|^2 \d t
     \label{ineq:bdd-fin-obs-est}
   \end{equation}
  for some $C_\tau \in (0, \varepsilon)$ and every classical solution $x$ of \eqref{Nport-Hamiltonian system} such that $\|x(t+s)\|_{t+s} \leq M_0 \|x(s)\|$ for every $s, t \geq 0$.
 \end{corollary}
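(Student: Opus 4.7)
The idea is to retrace the proof of Lemma \ref{Lemma Stability} and replace the single step that is responsible for the exponential growth of the constant $C_\tau$ in $\tau$. That factor, namely $e^{c_\tau \tau}$, arose from applying the energy growth estimate \eqref{key formula} of Theorem \ref{Main Theorem 2} in order to compare the energy of the solution at time $t_1 = \tau - \gamma(b-\z)$ with the integrated energy at earlier times $t \in [\gamma(b-\z), t_1]$. Under the uniform boundedness hypothesis $\|x(t+s)\|_{t+s} \leq M_0 \|x(s)\|$ we can instead use a $\tau$-independent pointwise comparison: applying the hypothesis with $t$ in the role of $s$ and combining with the norm equivalence $m\|\cdot\|^2 \leq \|\cdot\|_t^2 \leq M\|\cdot\|^2$ from Assumption \ref{Assumption1}(iv) yields
\[
 \|x(t_1)\|_{t_1}^2
  \leq M_0^2 \|x(t)\|^2
  \leq \frac{M_0^2}{m} \|x(t)\|_t^2,
  \quad 0 \leq t \leq t_1.
\]

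Substituting this pointwise bound for the exponential growth estimate in the proof of Lemma \ref{Lemma Stability} replaces the factor $e^{c_\tau \tau}$ by the $\tau$-independent constant $M_0^2/m$. The other factor $e^{\kappa_\tau(b-a)}$ appearing in the original constant is already bounded uniformly in $\tau$, since the standing assumption $\partial_\z \H \in L^\infty([0,\infty) \times [a,b]; \K^{n\times n})$ of Lemma \ref{Lemma Stability} permits the choice $\kappa_\tau \leq \kappa_\infty < \infty$ independently of $\tau$. Carrying the rest of the argument of Lemma \ref{Lemma Stability} through verbatim then leads to a constant of the form
\[
 C_\tau
  = \frac{M_0^2 \, e^{\kappa_\infty(b-a)} (b-a)}{m^2\,(\tau - 2\gamma(b-a))},
\]
which tends to $0$ as $\tau \to \infty$. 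Hence, given any $\varepsilon > 0$, it suffices to choose $\tau > 0$ sufficiently large to ensure $C_\tau < \varepsilon$.

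The passage from initial time $s = 0$ to arbitrary $s \geq 0$ in \eqref{ineq:bdd-fin-obs-est} is free of charge thanks to Remark \ref{Remark Independance of Constants}: the constants in the proof of Lemma \ref{Lemma Stability} may be taken independently of the initial time, so exactly the same $C_\tau$ works for every $s \geq 0$. The only technical point to verify is that the pointwise-in-$t$ substitution above survives the integration over $[\gamma(b-\z), t_1]$ that produces the denominator $\tau - 2\gamma(b-a)$; this is immediate, as the bound holds for every $t$ in that window. The main (and rather mild) conceptual obstacle is thus simply to recognise that the only place where $\tau$ enters exponentially in the original proof is through \eqref{key formula}, and that the hypothesis of the corollary is precisely tailored to bypass it.
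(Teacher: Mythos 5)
Your proof is correct, but it takes a genuinely different route from the paper. The paper treats Lemma \ref{Lemma Stability} as a black box: it fixes one admissible pair $(\tau_0, C_{\tau_0})$, sets $\tau = n\tau_0$, uses the hypothesis $\|x(t+s)\|_{t+s} \leq M_0\|x(s)\|$ to bound $\|x(s+\tau)\|_{s+\tau}^2$ by $M_0^2$ times the \emph{average} $\frac{1}{n}\sum_{k=1}^n \|x(s+k\tau_0)\|_{s+k\tau_0}^2$, applies the $\tau_0$-observability estimate on each subinterval $[s+(k-1)\tau_0, s+k\tau_0]$, and sums the integrals to obtain $C_\tau = M_0^2 C_{\tau_0}/n$, then chooses $n$ large. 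You instead re-open the proof of the lemma and surgically replace the one step responsible for the factor $e^{c_\tau\tau}$ --- the comparison of $\|x(\tau)\|_\tau^2$ with $\|x(t)\|_t^2$ for $t$ in the averaging window via \eqref{key formula} --- by the $\tau$-independent pointwise bound $\|x(\tau)\|_\tau^2 \leq \frac{M_0^2}{m}\|x(t)\|_t^2$ furnished by the boundedness hypothesis, and you correctly observe that the remaining $\tau$-dependence ($\kappa_\tau$ and $\gamma$) is uniform in $\tau$ because $\partial_\zeta\H \in L^\infty([0,\infty)\times[a,b])$ and $\H \geq m$ are assumed globally in time. Both arguments yield $C_\tau \sim C/\tau$, exactly as the paper remarks after the corollary. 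Your version has the minor advantages of working for every sufficiently large $\tau$ (not only multiples of a fixed $\tau_0$) and of producing a fully explicit constant; the paper's version is more modular, requiring no knowledge of the lemma's internals, and would survive any alternative proof of the finite observability estimate. The only cosmetic caveat is the bookkeeping of the equivalence constants $m, M$ between $\|\cdot\|$, $\|\cdot\|_t$ and $|\H(t,b)x(t,b)|$, which the paper itself handles loosely; your placement of the two factors of $1/m$ is consistent.
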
  
 \begin{proof}
  First, by Lemma \ref{Lemma Stability}, there are $\tau_0 > 0$ and $C_{\tau_0} > 0$ such that
   \[
    \| x(\tau_0 + s) \|_{\tau_0 + s}^2
     \leq C_{\tau_0} \int_s^{s + \tau_0} |\H(t,b) x(t,b)|^2 \d t,
     \quad
    \| x(\tau_0 + s) \|_{\tau_0 + s}^2
     \leq C_{\tau_0} \int_s^{s + \tau_0} |\H(t,a) x(t,a)|^2 \d t
   \]
  for every classical solution $x$ of \eqref{Nport-Hamiltonian system}.
  Now, fix $\tau_0$ and $C_{\tau_0} > 0$ from above and $M_0 > 0$, $\varepsilon > 0$.
  Let $\tau = n \tau_0$ for some $n \in \N$.
  For every classical solution $x$ with $\|x(t+s)\|_{t+s} \leq M_0 \|x(s)\|_s$ ($s,t \geq 0$) one then derives the estimate
   \begin{align*}
    \|x(s+\tau)\|_{s+\tau}^2
     &\leq \frac{M_0^2}{n} \sum_{k=1}^n \|x(s + k \tau_0)\|_{s + k \tau_0}^2
     \\
     &\leq \frac{M_0^2}{n} \sum_{k=1}^n C_{\tau_0} \int_{s + (k-1) \tau_0}^{s + k \tau_0} |\H(t,b) x(t,b)|^2 \d t
     \\
     &= \frac{M_0^2 C_{\tau_0}}{n} \int_s^{s + \tau} |\H(t,b) x(t,b)|^2 \d t,
     \quad
     s \geq 0.
   \end{align*}
  The assertion follows by taking $n > \frac{M_0^2 C_{\tau_0}}{\varepsilon}$ for $\tau = n \tau_0$ and $C_\tau = \frac{M_0^2 C_{\tau_0}}{n} < \varepsilon$.
 \end{proof}
The proof shows that for bounded evolution families, the constant $C_\tau$ can be chosen as $C_\tau \sim \frac{C}{\tau}$ for large $\tau \gg 1$.
More relevant for the proof, resp.\ the formulation of Theorem \ref{main Theorem Stability}, however, is the following remark, characterising contractive evolution families of non-autonomous port-Hamiltonian type.
 \begin{remark}
  The evolution family $\mathcal{U} = (U(t,s))_{t \geq s \geq 0}$ is contractive in the sense that $\|U(t,s) x\|_t \leq \|x\|_s$ for all $0 \leq s \leq t$ and $x \in X$ if and only if $W_B \Sigma W_B^\ast \geq 0$ is positive semi-definite, and condition \eqref{ineq:constraint} holds true.
 \end{remark}
\begin{proof}[Proof of Theorem \ref{main Theorem Stability}]
Let $s \geq 0$ be arbitrary, $x_s \in D(A\H(s))$ and $x: [s, \infty) \rightarrow X$ be the classical solution of \eqref{eqn:NACP} for $B(t) = \H(t,\cdot)$.
By \eqref{key formula}, Theorem \ref{Main Theorem 2} and the assumptions of Theorem \ref{main Theorem Stability} one has
 \begin{equation}\label{uniform estimate} 
  \| x(t+s) \|_{t+s}^2
   \leq \| x(s) \|_s^2,
  \qquad
   s, t \geq 0.
 \end{equation}
According to Lemma \ref{Lemma Stability} and Remark \ref{Remark Independance of Constants} there exists $ \tau >0$ and a constant $C_\tau > 0$ (which does not depend on $s \geq 0$) such that 
\[\|x(\tau+s)\|^2_{\tau+s}\leq C_\tau\int_s^{s+\tau} |\H(t,b)x(t,b)|^2 \d t, \qquad s\geq 0.\]
This inequality together with contractivity of the evolution family implies that (where w.l.o.g.\ we assume that $P_0 = 0$)
\begin{align*}
 &\|x(s+\tau)\|_{s+\tau}^2 - \|x(s)\|_s^2
 =\int_s^{s+\tau} \frac{\d}{\d t} \|x(t)\|_t^2 \d t
\\&=2\Re\int_s^{s+\tau} ( A\H(t)x(t) \mid \H(t)x(t) ) \d t + \int_s^{s+\tau} ( x(t) \mid (K^\ast(t) \H(t) + \H(t) K(t) + \frac{\partial \H(t)}{\partial t}) x(t) ) \d t
\\&\leq - 2\kappa \int_s^{s+\tau} |(\H x)(t,b)|^2 \d t
\leq -\frac{2\kappa}{C_\tau} \| x(s+\tau) \|^2_{s+\tau}
\end{align*}
holds for every  $s \geq 0.$
We deduce that 
\begin{equation}\label{estimate tau}
\|x(s + \tau)\|^2_{s+\tau}
 \leq \rho_\tau \|x(s)\|^2_s
 :=\frac{1}{1+\frac{2\kappa}{C_\tau}} \|x(s)\|^2_s,
 \quad
 s \geq 0.
\end{equation}
 Using \eqref{uniform estimate} and  \eqref{estimate tau} we obtain iteratively for all $s\geq 0$ and  $t=n\tau+r, r\in [0,\tau), n\in\N,$ that 
\begin{align*}
\|x(s+t)\|^2_{t+s}
 &= \|x(s+n\tau+r)\|^2_{s+n\tau+r}
 \leq \|x(s + n\tau)\|^2_{s+n\tau}
 \leq \rho_\tau^n \|x(s)\|^2_s
\\&\leq \rho_\tau^{-1}e^{\frac{\log(\rho_\tau)}{\tau} t } \|x(s)\|^2_s.
\end{align*}
Finally,  according to \eqref{eq LocEquiNorm} and \eqref{coercivity H} we obtain the desired estimate \eqref{Them stability estimate} with \[\omega:=\frac{\log(\rho_\tau)}{\tau}\  \text{and} \ L:= \rho_\tau^{-1}\frac{M}{m}.\]  This completes the proof of the asserted statement.

\end{proof}
\begin{corollary}\label{main Corollary}Assume that the assumptions of Lemma \ref{Lemma Stability} are satisfied. In addition we assume $W_B\Sigma W_B^*>0.$ Then the classical solution $x$ of (\ref{Nport-Hamiltonian system})-(\ref{NBoundary control})  is uniformly exponentially stable.
\end{corollary}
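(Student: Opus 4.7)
The plan is to reduce the claim directly to Theorem \ref{main Theorem Stability}. Its hypotheses are stated as pointwise boundary dissipation estimates on the reference operator $A$, rather than in terms of the algebraic condition $W_B\Sigma W_B^{\ast}>0$, so the decisive step is to translate strict positive-definiteness of $W_B\Sigma W_B^{\ast}$ into one of the inequalities \eqref{Main stabThmCond1}--\eqref{Main stabThmCond2}.

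The key ingredient for this step is the autonomous boundary dissipation result \cite[Lemma 9.4.1]{Jac-Zwa12}, already cited in the introduction: under Assumption \ref{Assumption1} together with $W_B\Sigma W_B^{\ast}>0$ there exists a constant $\kappa>0$ with
\[
 \Re(Ax\mid x)\leq -\kappa\lvert x(b)\rvert^{2},\qquad x\in D(A),
\]
which is precisely \eqref{Main stabThmCond1} (an analogous estimate in $|x(a)|^{2}$ yields \eqref{Main stabThmCond2}). Since the operator $A$ is built from $P_{0}$, $P_{1}$ and $\tilde W_{B}$ alone and involves neither $\H$ nor $K$, this finite-dimensional linear algebra argument from the autonomous theory transfers verbatim to the present non-autonomous setting. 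The further regularity hypotheses of Theorem \ref{main Theorem Stability} on $\H$ (the $C^{2}$, $C_{b}^{1}$ and uniform Lipschitz-in-space conditions) are already contained in, or a mild strengthening of, the standing assumptions of Lemma \ref{Lemma Stability}.

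The only remaining piece is the contractivity \eqref{ineq:contractivity} of the evolution family $\mathcal U$ on the time-varying energy norms $\|\cdot\|_{t}$. By the remark preceding the corollary, this is equivalent to the combination of $W_{B}\Sigma W_{B}^{\ast}\geq 0$ and condition \eqref{ineq:constraint}; the former is immediate from the strict inequality assumed here, while the latter is the monotonicity condition that has to be taken over from the framework of Theorem \ref{Main Theorem 2} and Corollary \ref{cor:passive_phs}. Once \eqref{Main stabThmCond1} and \eqref{ineq:contractivity} are both in hand, a direct invocation of Theorem \ref{main Theorem Stability} produces constants $\omega<0$ and $L\geq 1$ such that $\|x(t)\|\leq Le^{\omega(t-s)}\|x(s)\|$ for every classical solution $x$, which is exactly \eqref{Them stability estimate}. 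The main obstacle in this plan is thus not the extraction of the boundary dissipation from $W_{B}\Sigma W_{B}^{\ast}>0$ — that is classical — but rather the bookkeeping needed to guarantee contractivity in the time-varying norms, which is what forces the auxiliary monotonicity \eqref{ineq:constraint} to be part of the background hypotheses.
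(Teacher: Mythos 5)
Your proposal follows essentially the same route as the paper's own (one-line) proof: deduce the boundary dissipation estimates \eqref{Main stabThmCond1}--\eqref{Main stabThmCond2} from $W_B\Sigma W_B^*>0$ via the autonomous lemma in \cite{Jac-Zwa12} and then invoke Theorem \ref{main Theorem Stability}. You are in fact more careful than the paper, which silently passes over the contractivity hypothesis \eqref{ineq:contractivity} (equivalently, condition \eqref{ineq:constraint}) and the extra regularity on $\H$ required by Theorem \ref{main Theorem Stability} --- neither is literally implied by the stated hypotheses of the corollary, so your remark that these must be carried over as background assumptions is a correct observation about the statement itself.
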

\begin{proof} Since $W_B\Sigma W_B^*>0$, both conditions \eqref{Main stabThmCond1} and \eqref{Main stabThmCond2} hold, e.g.\ by the proof of \cite[Lemma 9.1.4]{Jac-Zwa12}. Now the claim follows from Theorem \ref{main Theorem Stability}.
\end{proof}
\begin{remark}The previous results have been proved in \cite[Theorem III.2]{Vi-Zw-Go-M} and \cite[Theorem 9.1.3, Theorem 7.2.4]{Jac-Zwa12} in the case where $\H$ is independent of $t.$\end{remark}

\begin{remark}
   It would be also possible to consider port-Hamiltonian systems of higher order $N \geq 2$, i.e.\
    \[
     A
      = \sum_{k=0}^N P_k \frac{\partial^k}{\partial \z^k}
    \]
   on an appropriate domain $D(A) \subseteq H^N(a,b;\K^N)$ (including, say, dissipative boundary conditions), where now the conditions on the matrices $P_k$ read: $P_k \in \K^{n \times n}$ with $P_k = (-1)^{k+1} P_k$ for $k \geq 1$ and $P_N$ invertible.
   The $C^1$-well posedness result Theorem \ref{Main Theorem 2} directly transfers to that situation.
   However, a final observability estimate as in Lemma \eqref{Lemma Stability} is not (yet) known for that situation, and proofs for uniform exponential stability in the autonomous situation rather rely on particular semigroup techniques (the ABLV-Theorem and the stability theorem of Gearhart, Pr\"uss and Huang) which are not at hand for non-autonomous problems.
\end{remark}
\subsection{A counterexample on stability}
In this subsection, we demonstrate that, in general, without contractivity of the evolution family as demanded in Theorem \ref{main Theorem Stability}, one may not deduce exponential stability for the evolution family $\mathcal{U}$ from the dissipation conditions \eqref{Main stabThmCond1} or \eqref{Main stabThmCond2}. In fact, neither of the following properties does help in general:
  \begin{enumerate}
   \item
    boundedness of the evolution family, i.e.\ there is $M_0 \geq 1$ such that for all $0 \leq s \leq t$ and $x \in X$: $\|U(t,s) x\|_t \leq M_0 \|x\|_s$.
   \item
    time-periodicity of $\H(t,\cdot)$, i.e.\ there is $t_0 > 0$ such that $\H(t+t_0,\cdot) = \H(t,\cdot)$ for all $t \geq 0$. 
  \end{enumerate}
To show this negative result, let us consider the following example of a two-component system of non-autonomous transport equations.
 \begin{example}
  Consider the following system of transport equations
   \[
    \frac{\partial}{\partial t} x_j(t,\zeta) = h_j(t) \frac{\partial}{\partial \zeta} x_j(t,\zeta),
     \quad
     t \geq 0, \, \zeta \in (0,1), \, j = 1,2
   \]
  with boundary conditions which interconnect the two transmission lines by
   \[
    h_2(t) x_2(t,1)
     = h_1(t) x_1(t,0),
     \quad
    h_1(t) x_1(t,1)
     = \alpha h_2(t) x_2(t,0)
   \]
  for some $\alpha \in \K$.
 \end{example}
 In the following, we investigate well-posedness and stability properties of this system.
 \begin{lemma}
  Let $h_1 = h_2 \equiv 1$, then the system of PDE's
   \[
    \begin{cases}
     \frac{\partial}{\partial t} x_i(t,\z)
      = \frac{\partial}{\partial \z} x_i(t,\z),
      &t \geq 0, \, \z \in (0,1), \, i = 1,2,
      \\
     x_2(t,1)
      = x_1(t,0),
      &t \geq 0,
      \\
     x_1(t,1)
      = \alpha x_2(t,0),
      &t \geq 0,
      \\
     (x_1,x_2)(0,\zeta)
      = (x_{0,1}, x_{0,2})(\zeta),
      &\zeta \in (0,1)
    \end{cases}
   \]
  has for every $\alpha \in \R$ and every initial data
   \[
    x_0 \in D(A_\alpha)
     = \{x \in H^1(0,1;\K^2): x_2(1) = x_1(0), \, x_1(1) = \alpha x_2(0) \}
   \]
  a unique classical solution $x \in C(\R_+; H^1(0,1;\K^2)) \cap C^1(\R_+; L^2(0,1; \K^2))$ which are given by $x(t,\cdot) = T(t) x_0$ for some $C_0$-semigroup $(T(t))_{t \geq 0}$ on $X = L^2(0,1;\K^2)$.
   Moreover, the semigroup is uniformly exponentially stable if and only if $|\alpha| < 1$, and it is isometric if and only if $|\alpha| = 1$.
 \end{lemma}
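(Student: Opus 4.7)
The natural approach is the method of characteristics, which for this transport-type system makes everything explicit. Since each component satisfies $\partial_t x_i = \partial_\zeta x_i$, any classical solution must be of the form $x_i(t,\zeta) = f_i(t+\zeta)$, with $f_i$ determined on $[0,1]$ by $f_i(\zeta) = x_{0,i}(\zeta)$. I would first extend $f_i$ to $[0,\infty)$ by noting that the two boundary conditions translate into the delay/reflection relations
\begin{equation*}
  f_2(s+1) = f_1(s), \qquad f_1(s+1) = \alpha f_2(s), \qquad s \geq 0,
\end{equation*}
which after one iteration give the $\alpha$-quasi-periodicity $f_i(s+2) = \alpha f_i(s)$ for $i = 1,2$ and $s \geq 0$. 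Hence both $f_i$ are determined explicitly on $[0,\infty)$ from the initial data on $[0,1]$.

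Next I would set $T(t) x_0 := (f_1(t+\cdot), f_2(t+\cdot))$ and verify that $(T(t))_{t\geq 0}$ is a $C_0$-semigroup on $X = L^2(0,1;\K^2)$. Linearity is immediate, and local boundedness ($\|T(s)\| \leq M_0$ for $s \in [0,2]$, some $M_0 \geq 1$) follows directly from the extension formulas; translation-invariance of the recursion yields $T(t+s) = T(t) T(s)$, and strong continuity follows from continuity of $L^2$-translations. For $x_0 \in D(A_\alpha)$, the two compatibility conditions $x_{0,2}(1) = x_{0,1}(0)$ and $x_{0,1}(1) = \alpha x_{0,2}(0)$ at the seam $s = 1$ make $f_i$ continuous there, and a piecewise construction places $f_i \in H^1_{\text{loc}}([0,\infty);\K)$; continuity at every later seam $s = n \in \N$ follows inductively from the same recursion. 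Consequently $T(\cdot) x_0 \in C(\R_+; H^1(0,1;\K^2)) \cap C^1(\R_+; L^2(0,1;\K^2))$ solves the PDE with the prescribed boundary conditions pointwise, and uniqueness of the classical solution is inherited from uniqueness of the characteristic extension.

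For the stability part I would compute the norm of $T(t) x_0$ explicitly. Using $f_i(u) = \alpha^n f_i(u-2n)$ for $u \in [2n, 2n+2)$,
\begin{equation*}
  \|T(2n) x_0\|_X^2 = \int_{2n}^{2n+1}\bigl(|f_1|^2 + |f_2|^2\bigr)\d u = |\alpha|^{2n}\|x_0\|_X^2,
\end{equation*}
so $\|T(2n)\| = |\alpha|^n$. Combined with $T(2n+r) = T(2n) T(r)$ and the bound $\|T(r)\|\leq M_0$ for $r \in [0,2]$, this yields $\|T(t)\| \leq M_0 |\alpha|^{-1} e^{(t/2)\log|\alpha|}$ when $|\alpha| < 1$, i.e.\ uniform exponential stability with rate $\omega = \tfrac12 \log|\alpha| < 0$; conversely, the identity $\|T(2n)\| = |\alpha|^n$ rules out exponential decay when $|\alpha| \geq 1$ and rules out isometry when $|\alpha| \neq 1$. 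For the remaining direction (isometry when $|\alpha|=1$), a direct computation on $t \in [0,1]$, splitting $\int_t^{t+1}(|f_1|^2+|f_2|^2)\d u$ at $u=1$ and inserting the explicit formulas for $f_i$ on $[0,1]$ and $[1,2]$, gives
\begin{equation*}
  \|T(t) x_0\|_X^2 = \|x_0\|_X^2 + (|\alpha|^2 - 1)\int_0^t |x_{0,2}(v)|^2 \d v, \qquad t \in [0,1],
\end{equation*}
which equals $\|x_0\|_X^2$ precisely when $|\alpha| = 1$; the semigroup property then extends isometry to all $t \geq 0$.

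The main technical point is the regularity check at the seams $s = n \in \N$ in the second paragraph: it is exactly there that the two compatibility conditions encoded in $D(A_\alpha)$ are required, and this explains the precise form of the generator domain. Everything else---the semigroup identity, strong continuity, and the stability dichotomy---reduces to translation calculus and the single observation that $|\alpha|$ governs the amplification per round-trip time $2$.
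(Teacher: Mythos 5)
Your proof is correct and takes the same route the paper indicates: it explicitly writes down the solution of the transport system via characteristics and reads off both well-posedness and the stability dichotomy from the resulting formula (the paper merely asserts this can be done, while you carry out the extension recursion, the seam-regularity check, and the norm computations in full). The only cosmetic blemish is that your bound $M_0|\alpha|^{-1}e^{(t/2)\log|\alpha|}$ degenerates at $\alpha=0$, where the semigroup is nilpotent and exponential stability is immediate anyway.
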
 
 \begin{proof}
  In this particular situation, and using well-known results on the transport equation, one may explicitly write down the (unique) classical solution, for any $\alpha \in \K$.
  The stability property then directly follows from the solution formula.
 \end{proof}
 \begin{remark}
  A port-Hamiltonian formulation of the example above is the following:
   \[
    \frac{\partial}{\partial t} x
     = A x
     = \left[ \begin{array}{cc} 1 & 0 \\ 0 & -1 \end{array} \right] \frac{\partial}{\partial \zeta} x,
     \quad
    D(A)
     = \{x \in H^1(0,1;\K^2): x_2(0) = x_1(0), \, x_1(1) = \alpha x_2(1) \},
   \]
  and it satisfies the dissipation condition
   \[
    \Re \, (A x \mid x)_{L^2}
     = (|\alpha|^2 - 1) |x_2(1)|^2
   \]
  which is less or equal $- \kappa |x(1)|^2$ (for some $\kappa > 0$) in case that $|\alpha| < 1$, for all $x \in D(A)$.
  (Note that we took $x = (\tilde x_1, \tilde x_2)$ for $\tilde x_1 = x_1$ and $\tilde x_2(t,\zeta) := x_2(t,1-\zeta)$ for the port-Hamiltonian formulation.
  In particular, condition \eqref{Main stabThmCond1} for $b = 1$ is satisfied then.
 \end{remark}
 \begin{example}
 \label{exa:counter-exa}
  Next, consider the following time-periodic (but not regular in time) choice of the weight functions $h_1(t), h_2(t)$:
   \[
    h_1(t) = \begin{cases} 2, & t \in [0, \tfrac{1}{2}) + \N_0, \\ 1, &\text{else} \end{cases},
     \quad
    h_2(t)
     = 3 - a_1(t) = \begin{cases} 1, & t \in [0, \tfrac{1}{2}) + \N_0, \\ 2, &\text{else} \end{cases}.
   \]
  Although $\H(t) := \operatorname{diag}\, (h_1(t), h_2(t))$ is not even continuous in time, one may nevertheless associate a evolution family to this problem by setting
   \[
    U(t,s) = \begin{cases} T_0(t-s), &s,t \in [0,\tfrac{1}{2}] + k \quad \text{for some } k \in \N_0 \\ T_1(t-s), &s,t \in [\frac{1}{2}, 1] + k \quad \text{for some } k \in \N_0 \end{cases},
    \quad
    0 \leq s \leq t
   \]
  and glueing $\mathcal{U}$ together, e.g.\
   \[
    U(t,s)
     = T_1(t-l) T_0(\frac{1}{2}) T_1(\frac{1}{2}) \cdots T_0(\frac{1}{2}) T_1(l+\frac{1}{2}-s),
     \quad
     \text{for } t \in l + [0,\frac{1}{2}], \, s \in k + [0, \frac{1}{2}], \, k < l \in \N,
   \]
  to satisfy the evolution family property, where $(T_0(t))_{t\geq0}$ is the $C_0$-semigroup generated by $A \H(0)$ and $(T_1(t))_{t\geq0}$ is the $C_0$-semigroup generated by $A \H(1)$, where we fixed some $\alpha \in \R$ in the boundary conditions.
  We claim that the resulting evolution family is exponentially bounded, but bounded and not exponentially stable for $|\alpha| = \frac{1}{2}$, and unbounded for $|\alpha| > \frac{1}{2}$.
 \end{example}
 \begin{proof}
  Since $(T_0(t))_{t\geq0}$ and $(T_1(t))_{t\geq0}$ are $C_0$-semigroups, there are $M \geq 1$ and $\omega \in \R$ such that $\|T_0(t)\|, \|T_1(t)\| \leq M \mathrm{e}^{\omega t}$ for all $t \geq 0$.
  From the construction of the evolution family, it then follows that $\|U(t,s)\| \leq \tilde{M} \mathrm{e}^{\tilde{\omega} (t-s)}$ for some $\tilde M \geq 1$ and $\tilde \omega \in \R$, and all $0 \leq s \leq t$.
  E.g.\ one may choose $\tilde M = M^2$ and $\tilde \omega = \omega + \ln M$:
   \begin{align*}
    \|U(t,s)\|
     &\leq \|U(t, \frac{\lfloor 2t \rfloor}{2})\| \|U(\frac{\lfloor 2t \rfloor}{2}, \frac{\lfloor 2t \rfloor - 1}{2})\| \cdots \|U(\frac{\lceil 2s \rceil + 1}{2}, \frac{\lceil 2s \rceil}{2})\| \|U(\frac{\lceil 2s \rceil}{2}, s)\|
     \\
     &\leq M^2 M^{t-s} \mathrm{e}^{\omega (t-s)}
     = M^2 \mathrm{e}^{(\omega + \ln M)(t-s)}
     = \tilde M \mathrm{e}^{\tilde \omega (t-s)},
     \quad
     t \geq s \geq 0.
   \end{align*}   
  \newline
  To show that $\mathcal{U}$ is not exponentially stable for $|\alpha| \geq \frac{1}{2}$ consider the initial datum $x_0 \equiv (0,1)^T \in L_2(0,1;\K^2)$.
  One may then calculate $U(n,0) x_0$ for $n \in \N$ as $U(t,0) x_0 \equiv (2 \alpha)^n (1,0)^T \in L^2(0,1;\K^2)$.
  Hence, for $|\alpha| = \tfrac{1}{2}$, the evolution family is not exponentially stable, and for $|\alpha| > \frac{1}{2}$ it is unstable.
  Moreover, for $|\alpha| = \frac{1}{2}$, using the construction of the evolution family, one may show that $|(U(n,0) x_0)(\z)| \leq |x_0(\z)|$ for all $n \in \N_0$ and a.e.\ $\z \in [0,1]$, in particular, the maps $U(n,0) \in \L(X)$ are contractions. From the exponential boundedness it then follows for all $0 \leq s \leq t$ that
   \[
    \|U(t,s)\|
     \leq \|U(t,\lfloor t \rfloor)\| \|U(\lfloor t \rfloor, \lceil s \rceil)\| \|U(\lceil s \rceil, s)\|
     = \|U(t,\lfloor t \rfloor)\| \|U(\lfloor t \rfloor - \lceil s \rceil, 0)\| \|U(\lceil s \rceil, s)\|
     \leq M^2 \mathrm{e}^{2 |\omega|}
   \]
  whenever $\lfloor t \rfloor \geq \lceil s \rceil$. The case $\lfloor t \rfloor < \lceil s \rceil$ is even easier and one then gets $\|U(t,s)\| \leq M \mathrm{e}^{|\omega|}$.
 \end{proof}
 \begin{remark}
  Clearly, Example \ref{exa:counter-exa} does not have the time-regularity of $\H$ demanded in Theorem \ref{main Theorem Stability}, however, one might easily adjust the example to have $a_1, a_2$ in class $C^2$, so that one cannot give up on the condition of contractivity in Theorem \ref{main Theorem Stability}; even time-periodicity of the Hamiltonian matrix functions does not help then.
 \end{remark}
\section{Examples }
In this section, the abstract results of the preceeding section are applied to some particular examples of beam equation; namely, the one-dimensional linear wave equation as a model for a rather flexible beam or string, e.g.\ of a musical instrument, and secondly the Timshenko beam model which is more detailed and better suitable for rigid beams, as it also includes a rotational displacement from the equilibrium configuration as a variable.

\subsection{A non-autonomous vibrating string}
As a first example, let us consider the model of a vibrating string on the compact interval $[a,b]$. We assume that the string is fixed at the left end point $\z = a$ and at the right end point $\z = b$ a linear damper is attached. In addition, Young's modulus $T$ and the mass density $\rho$ of the string are assumed to be time- and spatial dependent. Let us denote by $\omega(t,\zeta)$ the vertical position of the string at position $\zeta\in[a,b]$ and time $t\geq 0.$ Then the evolution of the vibrating string can be modelled by the non-autonomous wave equation
\begin{align}\label{eq: Vibrating string} 
\frac{\partial  }{\partial t}\Big(\rho(t,\zeta)\frac{\partial w }{\partial t}(t,\zeta) \Big)&=\frac{\partial}{\partial \zeta}\Big(T(t,\zeta)\frac{\partial w}{\partial \zeta}(t,\zeta)\Big), && \zeta\in[a,b],\ t\geq 0,\\
\label{Eq1: BCd}\frac{\partial w}{\partial t}(t,a)&=0, && t\geq 0
\\ \label{Eq2: BCd}T(t,b)\frac{\partial w}{\partial \zeta}(t,b)&=-k\frac{\partial w}{\partial t}(t,b), && t\geq 0.
\end{align} 
Taking the momentum-strain couple $(\rho \frac{\partial w}{\partial t}, \frac{\partial w}{\partial \zeta})$ as the energy state space variable, one sees that the non-autonomous vibrating string can be written as a system of the form  \eqref{Nport-Hamiltonian system}-\eqref{NBoundary control} with $P_0=0,$ \[P_1=\begin{bmatrix} 
 0&1\\
 1&0
 \end{bmatrix},\quad\H(t,\zeta)=\begin{bmatrix} 
 \frac{1}{\rho(t,\zeta)}&0\\
 0&T(t,\zeta)\end{bmatrix} \text{ and } \tilde W_B= \begin{bmatrix} k&1&0&0 \\ 0&0&1&0 \end{bmatrix}.\]
We assume that the Young's modulus function $T$ and  the mass density function $\rho$ are measurable and satisfy the following regularity and positivity conditions:
\begin{itemize}
\item [$(i)$] $T, \rho \in C^2([0,\infty);L^\infty(a,b)) \cap C_b([0,\infty);L^\infty(a,b))$
\item [$(ii)$] There is a constant $\alpha>0$ such that for a.e.\  $\zeta\in[a,b]$ and all $t\geq 0$
\begin{equation}\label{Exp1: Assumption 1}
\alpha^{-1} \leq \rho(t,\zeta), T(t,\zeta)\leq \alpha.
\end{equation} 
\end{itemize}

\noindent 
Recall that \eqref{eq: Vibrating string}-\eqref{Eq2: BCd} can be  reformulated  in the port-Hamiltonian form by choosing  the momentum-strain couple $(\rho \frac{\partial w}{\partial t}, \frac{\partial w}{\partial \zeta})$, i.e.\ the energy variables, as the  state variable with $P_0=0,$
\[P_1=\begin{bmatrix} 
0&1\\
1&0
\end{bmatrix} \quad \text{ and } \quad \H(t,\zeta)=\begin{bmatrix} 
\frac{1}{\rho(t,\zeta)}&0\\
0&T(t,\zeta)
\end{bmatrix}.\]
\noindent Moreover, the boundary conditions \eqref{Eq1: BCd}-\eqref{Eq2: BCd} can be reformulated as follows 
\begin{align*}\begin{bmatrix} 
0\\
0
\end{bmatrix}&=
\begin{bmatrix} k&1&0&0\\
0&0&1&0\end{bmatrix}\begin{bmatrix} \H(t,b)x\\ \H(t,a)x\end{bmatrix}
=:\tilde W_B\begin{bmatrix} \H(t,b)x\\ \H(t,a)x\end{bmatrix}
\end{align*}
The $2\times 4$ matrix \[W_B=\tilde W_B \begin{bmatrix}
P_1& -P_1\\
I&I
\end{bmatrix}^{-1}=
\begin{bmatrix} 1&k&k&1\\
0&-1&1&0\end{bmatrix}\]
has rank $2$ and $W_B\Sigma W_B^*=\begin{bmatrix}
2k& 0\\
0&0
\end{bmatrix}\geq 0.$
Integration by parts gives that
 \begin{equation}
  2 \Re \, (A x \mid x)
   = \left[ (x(b) \mid P_1 x(b) ) - (x(a) \mid P_1 x(a)) \right]
 \end{equation}
holds for all $x\in D(A)$.
By this equality, together with the boundary conditions \eqref{Eq1: BCd}-\eqref{Eq2: BCd}, we observe that for $x=\begin{bmatrix} 
\rho\frac{\partial w}{\partial \zeta}\\
\frac{\partial w}{\partial \zeta}
\end{bmatrix}$
\[2\Re (Ax\mid x) \leq -\frac{k}{1+k^2} |\H(b,t)x(b)|^2,\quad x \in D(A) = \{x \in H^1(a,b; \K^2): \, x_1(a) = 0, \, x_2(b) = - k x_1(b)\}.\]

\medskip

\noindent Thus the following well-posedness and stability results follows from Theorem \ref{main Theorem Stability}.
\begin{proposition} Let $\omega_0\in H^1(a,b)$ and $\omega_1 \in H^1(a,b)$ be such that $T(0,\cdot) \frac{\partial \omega_0}{\partial \z} \in H^1(a,b)$ and $T(b,0)\frac{\partial w_0}{\partial \zeta}(b)=0$.
Moreover, assume that $\omega_1(a) = 0$ and $T(b,0) \omega_0'(b) = - k \omega_0(b)$.
Then, \eqref{eq: Vibrating string} with boundary conditions \eqref{Eq1: BCd}-\eqref{Eq2: BCd} and initial conditions
 \[
  \omega(0,\cdot) = \omega_0,
   \quad
   \frac{\partial \omega}{\partial t}(0,\cdot) = \omega_1
 \]
has a unique solution $\omega$ such that \[\left(t\longmapsto \begin{bmatrix}\frac{\partial w}{\partial t}\\
T(t,\cdot) \frac{\partial w_0(t,\cdot)}{\partial \zeta}\end{bmatrix}\right) \in C^1\big([0,\infty); L^2(a,b;\K^2)\big)\cap C\big([0,\infty); H^1(a,b;\K^2)\big) \]
If, in addition, $T, \rho \in C^2([0,\infty);C([a,b])) \cap C_b^1([0,\infty);C([a,b])) \cap L^\infty([0,\infty); \operatorname{Lip}([a,b]))$ and $T, \rho^{-1}$ are decreasing with respect to the time variable, then we have
\begin{equation}
\|\frac{\partial w}{\partial t}(t,\cdot)\|_{L^2(a,b)}^2+\|T(t,\cdot)\frac{\partial w}{\partial \zeta}(t,\cdot)\|_{L^2(a,b)}^2
 \leq Me^{\omega t} \left( \|\frac{\partial w_0}{\partial \zeta}(\cdot)\|_{L^2(a,b)}^2 + \| \omega_1 \|_{L_2(a,b)}^2 \right)
\end{equation}
for all $t\geq 0$ and some constants $M\geq 1$ and $\omega < 0$ that are independent of $t \geq 0$ and the initial data $\omega_0, \omega_1$.
More precisely, for all $s, t \geq 0$ one has the estimate
 \[
  \|\frac{\partial w}{\partial t}(t+s,\cdot)\|_{L^2(a,b)}^2+\|T(t+s,\cdot)\frac{\partial w}{\partial \zeta}(t+s,\cdot)\|_{L^2(a,b)}^2
 \leq Me^{\omega t} \left( \|\frac{\partial w}{\partial t}(s,\cdot)\|_{L^2(a,b)}^2+\|T(t,\cdot)\frac{\partial w}{\partial \zeta}(s,\cdot)\|_{L^2(a,b)}^2 \right).
 \]
\end{proposition}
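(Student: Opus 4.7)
The plan is to cast the non-autonomous wave equation \eqref{eq: Vibrating string}--\eqref{Eq2: BCd} as an instance of the non-autonomous port-Hamiltonian system \eqref{Nport-Hamiltonian system}--\eqref{NBoundary control}, and then directly invoke Theorem \ref{Main Theorem 2} (for well-posedness) and Theorem \ref{main Theorem Stability} (for exponential stability), using the identifications of $P_0, P_1, \H, \tilde W_B$ carried out in the preceding paragraphs together with $K = 0$.

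For the well-posedness part, I first verify Assumption \ref{Assumption1}. Items (i), (ii), (iii), and (v) are immediate from the explicit form of the matrices, while (iv), namely $\H \in C^2([0,\infty);L^\infty(a,b;\K^{2\times 2}))$ with the two-sided bound $\alpha^{-1}I \leq \H(t,\z) \leq \alpha I$, follows directly from the standing hypotheses on $T$ and $\rho$. The initial state in port-Hamiltonian form is $x_0 := (\rho(0,\cdot)\omega_1,\,\omega_0')^T$; the condition $\H(0,\cdot) x_0 = (\omega_1,\, T(0,\cdot)\omega_0')^T \in H^1(a,b;\K^2)$ is covered by the regularity assumptions on $\omega_0, \omega_1$, while the boundary constraint defining $Y_0$ reduces exactly to the compatibility conditions $\omega_1(a) = 0$ and $k\omega_1(b) + T(0,b)\omega_0'(b) = 0$ imposed in the proposition. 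Theorem \ref{Main Theorem 2} then produces a unique classical solution $x$, and unpacking $\H(t,\cdot) x(t,\cdot) = (\partial_t w(t,\cdot),\,T(t,\cdot)\partial_\z w(t,\cdot))^T$ yields the stated regularity of $w$.

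For the stability assertion, the sharpened regularity hypothesis on $T$ and $\rho$ places us in the setting of Theorem \ref{main Theorem Stability}. Monotonicity of $T$ and $1/\rho$ in $t$ gives $\partial_t \H(t,\z) \leq 0$ pointwise a.e., which (together with $K = 0$) is condition \eqref{ineq:constraint}; by Corollary \ref{cor:passive_phs} the evolution family is then contractive in the sense of \eqref{ineq:contractivity}. For the dissipation condition \eqref{Main stabThmCond1}, an integration by parts on $D(A) = \{x \in H^1(a,b;\K^2) : x_1(a) = 0,\ kx_1(b) + x_2(b) = 0\}$ yields
\[
 2\Re\,(Ax \mid x) = (x(b) \mid P_1 x(b)) - (x(a) \mid P_1 x(a)) = 2 x_1(b) x_2(b) = -2k\, x_1(b)^2 = -\frac{2k}{1+k^2}|x(b)|^2,
\]
so \eqref{Main stabThmCond1} holds with $\kappa = k/(1+k^2) > 0$. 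Theorem \ref{main Theorem Stability} then delivers constants $\omega < 0$ and $L \geq 1$, independent of the initial time $s \geq 0$, such that $\|x(s+t)\|_{s+t} \leq L\,e^{\omega t}\|x(s)\|_s$. The final inequality in the proposition follows by using $\alpha^{-1} I \leq \H \leq \alpha I$ to pass between $\|\cdot\|_t$ and the standard $L^2$-norm, together with $\H x = (\partial_t w,\,T \partial_\z w)^T$.

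The only real obstacle is bookkeeping: checking that the wave-equation compatibility conditions on $(\omega_0,\omega_1)$ correspond exactly to the membership $x_0 \in Y_0$, and translating the abstract estimate in $\|\cdot\|_t$ back to the energy expression $\|\partial_t w\|_{L^2}^2 + \|T\partial_\z w\|_{L^2}^2$ that appears in the proposition. Once this dictionary is set up, both assertions are direct applications of the abstract theorems established in Section \ref{Section 2}.
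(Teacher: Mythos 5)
Your proposal is correct and follows essentially the same route as the paper, which likewise reformulates the wave equation in port-Hamiltonian form, verifies $W_B\Sigma W_B^*\geq 0$ and the boundary dissipation estimate $2\Re\,(Ax\mid x)\leq -\tfrac{2k}{1+k^2}|x(b)|^2$ via integration by parts, and then cites Theorems \ref{Main Theorem 2} and \ref{main Theorem Stability}. Your write-up is in fact more explicit than the paper's about the compatibility conditions (where you correctly identify $k\omega_1(b)+T(0,b)\omega_0'(b)=0$, consistent with \eqref{Eq2: BCd}) and about invoking Corollary \ref{cor:passive_phs} to obtain contractivity from the monotonicity of $T$ and $\rho^{-1}$.
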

\subsection{Non-autonomous Timoshenko Beam}  
As a second example, consider the non-autonomous version of the Timoshenko beam model given by the equations:
\begin{align}\label{TimoshenkoEq1}
\frac{\partial }{\partial t}\big(\rho(t,\zeta)\frac{\partial w}{\partial t}(t,\zeta)\big)&=\frac{\partial}{\partial \zeta}\Big[ K(t,\zeta)\Big(\frac{\partial}{\partial \zeta}w(t,\zeta)+\phi(t,\zeta)\Big)\Big]
\\
\label{TimoshenkoEq2}\frac{\partial }{\partial t}\big(I_\rho(t,\zeta)\frac{\partial \phi}{\partial t}(t,\zeta)\big)&=\frac{\partial}{\partial \zeta}\Big(EI(t,\zeta)\frac{\partial ^2}{\partial \zeta}\phi(t,\zeta)\Big)+K(t,\zeta)\Big(\frac{\partial }{\partial \zeta}w(t,\zeta)-\phi(t,\zeta)\Big)
\end{align}
where $\zeta \in (a,b), t\geq 0,$ $w(t,\zeta)$ is the transverse displacement of the beam and $\phi(t,\zeta)$ is the rotation angle of the filament of the beam. Dropping the coordinates $\zeta$ and $t$ in the notation and taking $x:=(\frac{\partial w}{\partial \zeta}-\phi, \rho\frac{\partial w}{\partial t}, \frac{\partial \phi}{\partial \zeta}, I_\rho\frac{\partial\phi}{\partial t})$ as state variable, one can see that the Timoshenko beam model may be written as a system of the form (\ref{Nport-Hamiltonian system})-(\ref{NBoundary control}) with 
\medskip

\[P_1=\begin{bmatrix}
0 & 1 & 0 & 0\\
1 & 0 & 0 & 0\\
0 & 0 & 0 & 1\\
0 & 0 & 1 & 0\\
\end{bmatrix},
\quad
P_0=\begin{bmatrix}
0 & 0 & 0 & -1\\
0 & 0 & 0 & 0\\
0 & 0 & 0 & 0\\
1 & 0 & 0 & 0\\
\end{bmatrix}
\quad \text{and} \quad
\H=\begin{bmatrix}
K & 0 & 0 & 0\\
	0 & \rho^{-1} & 0 & 0\\
	0 & 0 & EI & 0\\
0 & 0 & 0 & I_\rho^{-1}\\
\end{bmatrix}.\]
\medskip

At the right and left end of the Timoshenko beam, we impose the following boundary conditions
\begin{align}\frac{\partial w}{\partial t}(t,a)&=0,\qquad &t\geq 0
\\\frac{\partial\phi}{\partial t}(t,a)&=0,\qquad  &t\geq 0
\\ K(t,b)\Big[\frac{\partial w}{\partial \zeta}(t,b)-\phi(t,b)\Big]&=-\alpha_1\frac{\partial w}{\partial t}(t,b), \qquad  &t\geq 0
\\ EI(t,b)\frac{\partial \phi}{\partial \zeta}(t,b)&=-\alpha_2\frac{\partial\phi}{\partial t}(t,b),\qquad  &t\geq 0
\end{align}
for some positive constants $\alpha_1, \alpha_2\geq 0$, i.e.\ we impose conservative boundary conditions at the left end $\z = a$ and dissipative feedback at the right end $\z = b$ of the beam.
These boundary conditions can be written as 
\begin{align*}\begin{bmatrix} 
0\\
0\\
0\\
0
\end{bmatrix}&=\begin{bmatrix} 0&0&0&0&0&1&0&0
\\ 0&0&0&0&0&0&0&1
\\ 1&\alpha_1&0&0&0&0&0&0
\\ 0&0&1&\alpha_2&0&0&0&0
\end{bmatrix}\begin{bmatrix} \H(t,b)x\\ \H(t,a)x\end{bmatrix} :=\tilde W_B \begin{bmatrix} \H(t,b)x\\ \H(t,a)x\end{bmatrix}
\end{align*}
The corresponding $4\times 8$ matrix $W_B$ is given by 
\[W_B= \tilde W_B \begin{bmatrix}
P_1& -P_1\\
I&I
\end{bmatrix}^{-1}=\frac{1}{2} \begin{bmatrix} -1&0&0&0&0&1&0&0
\\ 0&0&-1&0&0&0&0&1
\\\alpha_1&1&0&0&1&\alpha_1&0&0
\\0&0&\alpha_2&1&0&0&1&\alpha_2
\end{bmatrix}\]
has full rank and $W_B\Sigma W_B^*= \begin{bmatrix}0&0&0&0
\\ 0&0&0&0
\\ 0&0&\alpha_1&0
\\ 0&0&0&\alpha_2
\end{bmatrix}\geq 0.$ 
\par 

We conclude that the non-autonomous Cauchy problem associated with the Timoshenko beam \eqref{TimoshenkoEq1}-\eqref{TimoshenkoEq1} is $C^1$-well posed provided the physical parameters defining the Hamiltonian matrix $\H$ satisfiy similar conditions to those described for the vibrating string, i.e.\ $K, \rho, EI, I_\rho \in C^2([0,\infty);L^\infty(a,b)) \cap C_b^1([0,\infty);L^\infty(a,b))$. Moreover, by Theorem \ref{main Theorem Stability} the associated evolution family is uniformly exponentially stable as long as $\alpha_1, \alpha_2>0$ are both strictly positive and $K, \rho, EI, I_\rho \in C^2([0,\infty);C([a,b])) \cap C_b^1([0,\infty);C([a,b])) \cap L^\infty([0,\infty);\operatorname{Lip}([a,b])$ and $K, \rho^{-1}, EI, I_\rho^{-1}$ are decreasing with respect to the time variable $t$.

\end{document}